\documentclass[a4paper,10pt]{article}

\usepackage{amsfonts,amssymb,amsmath}

\numberwithin{equation}{section}



\usepackage{tikz-cd}
 \usepackage{comment}
 \input{xy}
\xyoption{all}


\newtheorem{theo}{Theorem}[section]
\newtheorem{coro}[theo]{Corollary}
\newtheorem{lemm}[theo]{Lemma}
\newtheorem{prop}[theo]{Proposition}
\newtheorem{defi}[theo]{Definition}
\newtheorem{rema}[theo]{Remark}
\newtheorem{exam}[theo]{Example}

\newenvironment{proof}{\noindent \textbf{{Proof.}} \sf}

\def\qed{\hfill $\diamond$ \bigskip}

\def\A{{\mathcal A}}
\def\B{{\mathcal B}}

\def\lim{\mathop{\rm lim}\nolimits}

\def\Ext{\mathsf{Ext}}

\def\Tor{\mathsf{Tor}}

\def\Ker{\mathsf{Ker}}

\def\Im{\mathsf{Im}}

\def\dim{\mathsf{dim}}

\begin{document}
\sf

\title{Han's conjecture for bounded extensions}
\author{Claude Cibils,  Marcelo Lanzilotta, Eduardo N. Marcos,\\and Andrea Solotar
\thanks{\footnotesize This work has been supported by the projects  UBACYT 20020170100613BA, PIP-CONICET 11220150100483CO, USP-COFECUB.
The third mentioned author was supported by the thematic project of FAPESP 2014/09310-5, a research grant from CNPq 302003/2018-5  and acknowledges support from the "Brazilian-French Network in Mathematics". The fourth mentioned author is a research member of CONICET (Argentina), Senior Associate at ICTP and visiting Professor at Guangdong Technion-Israel Institute of Technology.}}

\date{}
\maketitle
\begin{abstract}
Let $B\subset A$ be a left or right bounded extension of finite dimensional algebras. We use the Jacobi-Zariski long nearly exact sequence to show that $B$ satisfies Han's conjecture if and only if $A$ does, regardless if the extension splits or not. We provide conditions ensuring that an extension by arrows and relations is left or right bounded. Finally we give a structure result for extensions of an algebra  given by a quiver and admissible relations, and examples  of non split  left or right bounded extensions.
\end{abstract}

\noindent 2020 MSC: 18G25, 16E40, 16E30, 18G15

\noindent \textbf{Keywords:} Hochschild, homology, relative, Han, quiver


\section{\sf Introduction}

Let $k$ be a field.  We recall  that for noetherian rings the left and right global dimension coincide, see \cite{AUSLANDER}.  Han's conjecture (see \cite{HAN}) states that for a finite dimensional algebra $A$, if $H_*(A,A)=0$ in large enough degrees, then $A$ has finite global dimension.  In  \cite{CLMS Pacific} we have included a brief account of the main advances towards proving this conjecture, see \cite{BACH, AVRAMOVVIGUE, BERGHMADSEN2009, BERGHMADSEN2017, BERGHERDMANN, SOLOTARVIGUE, SOLOTARSUAREZVIVAS, CIBILSREDONDOSOLOTAR}.

An extension $B\subset A$ of $k$-algebras is called  left (resp. right) bounded if the $B$-bimodule $A/B$ is $B$-tensor nilpotent, its projective dimension is finite and it is left (resp. right) $B$-projective. A main result of this paper is that for a  left or right bounded extension $B\subset A$, the $k$-algebra $B$ satisfies Han's conjecture if and only if $A$ does. Notice that we do not assume the extension to be split.

Actually the purpose of this paper is twofold. Firstly to provide results which may be used to associate to a given algebra $A$ an easier algebra $B$ for which the property of satisfying Han's conjecture is equivalent to the same property for $A$. In \cite{CIBILSREDONDOSOLOTAR,CLMS Pacific} a theory is developed for split extensions. Here we obtain results that allow us to deal with non split extensions. Even if there are algebras to which we cannot apply our methods, our guiding idea is that, starting from a given finite dimensional algebra, we can reduce the study of Han's conjecture about it to the same problem for an algebra where the computation of the Hochschild homology is more tractable.  The second aim of the paper is to provide examples of this procedure. We develop a theory of extensions by arrows and relations and we give conditions ensuring that the extension is left or right bounded. We also show that all extensions of an algebra given by a quiver and admissible relations are obtained through our construction in Section \ref{section by arrows and relations}, Theorem \ref{structuretheorem}.

We describe in what follows the structure of the paper in more detail.

Let $B\subset A$ be an extension of $k$-algebras. G. Hochschild defined Hochschild (resp. relative) homology with coefficients in an $A$-bimodule $X$  in \cite{HOCHSCHILD1945} (resp. \cite{HOCHSCHILD1956}). These are vector spaces denoted $H_*(A,X)$ (resp. $H_*(A|B, X)$), for $*\geq 0$. See also the books \cite{LODAY, WEIBEL, WITHERSPOON} for a general account of Hochschild theory.

The proof of the above mentioned main result of this paper relies on the normalised relative bar resolution and the Jacobi-Zariski long nearly exact sequence obtained in \cite{CLMS2020nearlyexactJZ}. We give a brief account of these results in Section \ref{nor and jz}.

It is worth noting that in Sections \ref{HH} and  \ref{smoothness} we consider extensions of algebras which may be infinite dimensional.
We first show that if an extension $B\subset A$ is  left or right bounded then the Hochschild homologies of $A$ and $B$ are isomorphic in large enough degrees. Then we prove in Section \ref{smoothness} that for a left or right bounded extension,  finite global dimension  is preserved. Note that for the latter, the normalised relative bar resolution is also a crucial tool. We end Section \ref{smoothness}  by proving the main quoted result about Han's conjecture for  left or right  bounded extensions of finite dimensional algebras.

In Section \ref{section by arrows and relations} we construct explicit extensions of an algebra $B=kQ/I$ where $Q$ is a quiver and $I$ is an admissible ideal of the path algebra $kQ$. A structure result is proved,  which shows that this construction provides every $B$-finitely generated algebra extension of $B$. In detail, we first add new arrows to $Q$, to obtain a tensor algebra extension $T$ of $B$. Then we add new relations through an ideal $J\subset T$ verifying $B\cap J=0$ to obtain an extension $B\subset A=T/J$. We focus on extensions $B\subset A$ of this sort to give conditions ensuring that the $B$-bimodule $A/B$ is tensor nilpotent and/or projective on one side over $B$. We use relative paths, they are alternate concatenations of new arrows and non-zero paths of $B$.  If every relative cycle contains a $J$-interrupter (see Definition \ref{interrupter}), then $A/B$ is $B$-tensor nilpotent. If the projection of $J$ to the positive part of $T$ is generated as a $B$-bimodule by relative paths ending in the same new arrow, then $A/B$ is left projective as a $B$-module.

Finally in Section \ref{examples} we consider examples of extensions by arrows and relations.  The above criteria enables to prove that they are left or right bounded, and we show that they are not split.

We thank Konstiantyn Iusenko and John MacQuarrie for their very pertinent comments on an earlier version of this article. We thank the referee for an attentive reading and useful suggestions.

\section{\sf Normalised relative bar resolution and the Jacobi-Zariski sequence}\label{nor and jz}

In this section we recall some results from \cite{CLMS2020nearlyexactJZ} that will be useful for the sequel. We first recall the normalised relative bar resolution.

\begin{prop}\cite[Proposition 2.3]{CLMS2020nearlyexactJZ}\label{normalised bar resolution}
Let $B\subset A$ be an extension of $k$-algebras and let $\sigma$ be a $k$-linear section of the canonical projection $\pi: A\to A/B$. The following is a $B$-relative resolution of $A$:
\begin{equation*}\label{nbrr}\cdots\stackrel{d}{\to}
A\otimes_B(A/B)^{\otimes_Bm}\otimes_BA
\stackrel{d}{\to}\cdots\stackrel{d}{\to}A\otimes_BA/B\otimes_BA\stackrel{d}{\to}A\otimes_BA
\stackrel{d}{\to}A\to 0\end{equation*}
where the last $d$ is the product of $A$ and
\begin{align*}
d(a_0\otimes\alpha_{1}\otimes\dots\otimes&\alpha_{n-1}\otimes a_n)=  a_0\sigma(\alpha_1)\otimes\alpha_2\otimes\dots\otimes\alpha_{n-1}\otimes a_n+\\
&\sum_{i=1}^{n-2}(-1)^ia_0\otimes\alpha_{1}\otimes\dots\otimes \pi(\sigma(\alpha_i)\sigma(\alpha_{i+1}))\otimes\dots\otimes\alpha_{n-1}\otimes a_n +\\
&(-1)^{n-1}a_0\otimes\alpha_{1}\otimes\dots\otimes\sigma(\alpha_{n-1})a_n.
\end{align*}
Moreover the differential $d$ does not depend on the choice of the section $\sigma$.
\end{prop}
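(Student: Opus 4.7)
The plan is to verify four things in sequence: that the formula for $d$ is well-defined on the $B$-tensor products, that it does not depend on the choice of section $\sigma$, that $d\circ d=0$, and that the resulting complex is a $B$-relative projective resolution of $A$.

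For well-definedness I would check the formula against each $B$-relation across a tensor sign. For example, comparing the images of $a_0b\otimes\alpha_1\otimes\alpha_2\otimes\cdots$ and $a_0\otimes b\alpha_1\otimes\alpha_2\otimes\cdots$ under $d$, only the boundary term involving $\sigma(\alpha_1)$ and the first inner term involving $\pi(\sigma(\alpha_1)\sigma(\alpha_2))$ can differ. Setting $c=b\sigma(\alpha_1)-\sigma(b\alpha_1)\in\ker\pi=B$, the boundary discrepancy becomes $a_0c\otimes\alpha_2\otimes\cdots=a_0\otimes c\alpha_2\otimes\cdots$, and a parallel computation of the inner term, using $\pi(c\,\sigma(\alpha_2))=c\alpha_2$ and $\pi(b\,x)=b\,\pi(x)$ for $b\in B$, produces $-a_0\otimes c\alpha_2\otimes\cdots$, so the two cancel. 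The same cancellation pattern handles every remaining $B$-relation. Independence of $\sigma$ is obtained by an identical argument, now with $c=\sigma'(\alpha)-\sigma(\alpha)\in B$, and propagated through each of the three kinds of summand (initial, middle, final). The identity $d\circ d=0$ is proved by mimicking the classical bar computation: upon expansion, consecutive $\sigma$-multiplications assemble into triple products that agree by associativity in $A$ (employing $\pi\sigma=\mathrm{id}_{A/B}$), and the residual $B$-valued discrepancies vanish after a further application of $\pi$, yielding the usual sign-cancellation.

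For the resolution property, each term $A\otimes_B(A/B)^{\otimes_B m}\otimes_B A$ is induced from the $B$-bimodule $(A/B)^{\otimes_B m}$ and is therefore $B$-relatively projective as an $A$-bimodule. I would establish exactness in one of two equivalent ways: either directly, by producing a $B$-linear contracting homotopy of the form $s(a_0\otimes\alpha_1\otimes\cdots\otimes a_n)=1\otimes\pi(a_0)\otimes\alpha_1\otimes\cdots\otimes a_n$ and verifying $ds+sd=\mathrm{id}$; or indirectly, by comparing with the (known) un-normalised $B$-relative bar resolution of $A$ via the chain map that applies $\pi$ factorwise in the middle, and showing that the kernel, a subcomplex of degenerate tensors, is contractible by means of the section $\sigma$.

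The main obstacle I anticipate is this last verification. The differential mixes the straight $\sigma$-multiplications at the boundary with the composite $\pi\circ(\sigma\cdot\sigma)$ in the interior, so inserting the candidate homotopy $s$ produces summands of two different shapes; checking $ds+sd=\mathrm{id}$ then requires careful bookkeeping of the $(\sigma\pi-\mathrm{id})$-corrections at each slot and of the interaction between $\pi(a_0)$ and the adjacent $\alpha_1$. The comparison route avoids much of this but replaces it with the subsidiary (and still delicate) task of proving contractibility of the degenerate subcomplex.
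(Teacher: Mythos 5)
This proposition is quoted in the paper from \cite[Proposition 2.3]{CLMS2020nearlyexactJZ} and no proof is reproduced here, so there is no in-paper argument to compare yours against; I can only assess your plan on its own terms, and it is sound. The four-step structure (balance over $\otimes_B$, independence of $\sigma$, $d^2=0$, relative projectivity plus a contracting homotopy) is the standard direct verification, and the two computations you single out as the crux are handled correctly: in each case the discrepancy $c$ (whether $b\sigma(\alpha)-\sigma(b\alpha)$, $\sigma'(\alpha)-\sigma(\alpha)$, or $\sigma(\pi(x))-x$) lies in $B=\ker\pi$, slides across $\otimes_B$, and is killed or cancelled by the adjacent $\pi(\sigma(\cdot)\sigma(\cdot))$ term; your proposed homotopy $s(a_0\otimes\alpha_1\otimes\cdots\otimes a_n)=1\otimes\pi(a_0)\otimes\alpha_1\otimes\cdots\otimes a_n$ (with $s(a)=1\otimes a$ at the bottom) does satisfy $ds+sd=\mathrm{id}$, as a check in low degrees confirms. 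One point you should make explicit: ``$B$-relative resolution'' requires not just exactness but $B$-splitness, so you must observe that $s$ is a morphism of $B$-$A$-bimodules (it is: left $B$-linearity follows because $b\otimes_B x=1\otimes_B bx$), which is what certifies the complex as allowable in Hochschild's relative sense. Of your two routes for exactness, the comparison with the unnormalised relative bar resolution is arguably cleaner, since there $d^2=0$ and the homotopy $1\otimes(-)$ are immediate and only the contractibility of the degenerate subcomplex remains; but the direct route you lead with is complete as described.
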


The normalised relative bar resolution provides a chain complex for computing the relative Hochschild homology as follows.

 \begin{theo} \cite[Corollary 2.4]{CLMS2020nearlyexactJZ}\label{compute relative homology}
  Let $B\subset A$ be an extension of $k$-algebras and let $X$ be an $A$-bimodule. Let $\sigma$ be a $k$-linear section of $\pi:A\to A/B$. The homology of the following chain complex is the relative Hochschild homology $H_*(A|B,X)$
  \begin{equation}\ \ \cdots  \stackrel{b}{\to} X\otimes_{B^e} (A/B)^{\otimes_Bm}\stackrel{b}{\to}\cdots \stackrel{b}{\to} X\otimes_{B^e}A/B \stackrel{b}{\to} X_B\to 0\end{equation}
  where
  \begin{align*}
b(x\otimes\alpha_{1}\otimes\dots\otimes&\alpha_{n})=  x\sigma(\alpha_1)\otimes\alpha_2\otimes\dots\otimes\alpha_{n}+\\
&\sum_{i=1}^{n-1}(-1)^i \ x\otimes\alpha_{1}\otimes\dots\otimes \pi(\sigma(\alpha_i)\sigma(\alpha_{i+1}))\otimes\dots\otimes\alpha_{n}  +\\
&(-1)^{n}\ \sigma(\alpha_n)x\otimes\alpha_{1}\otimes\dots\otimes\alpha_{n-1}.
\end{align*}
The coboundary $b$ does not depend on the choice of the section $\sigma$.
\end{theo}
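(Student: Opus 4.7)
The plan is to derive the complex directly from the normalised relative bar resolution of Proposition \ref{normalised bar resolution} by applying the functor $X\otimes_{A^e}(\place)$, since by definition the relative Hochschild homology $H_*(A|B,X)$ is computed as $\tor_*^{A^e|B^e}(X,A)$, which can be obtained from any $B$-relative projective resolution of $A$ as an $A$-bimodule. Proposition \ref{normalised bar resolution} exhibits such a resolution, whose $n$-th term (for $n\geq 0$) is $A\otimes_B(A/B)^{\otimes_Bn}\otimes_BA$, so after deleting the augmentation $A$ and applying $X\otimes_{A^e}(\place)$, we get a chain complex whose homology computes $H_*(A|B,X)$.

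The next step is to identify each term with the corresponding term of the proposed complex. For any $B$-bimodule $M$ and any $A$-bimodule $X$ there is a natural isomorphism
\[
X\otimes_{A^e}\bigl(A\otimes_B M\otimes_B A\bigr)\;\cong\; X\otimes_{B^e}M,
\]
which results from the standard fact that induction $A\otimes_B(\place)\otimes_BA$ is left adjoint to the restriction $A^e\mbox{-}\mathsf{mod}\to B^e\mbox{-}\mathsf{mod}$, or equivalently from the obvious action-absorption map $x\otimes(a\otimes m\otimes a')\mapsto a'xa\otimes m$. Specializing to $M=(A/B)^{\otimes_Bn}$ (and to $M=B$ for the bottom term, using $X\otimes_{B^e}B=X_B$) identifies the $n$-th term of $X\otimes_{A^e}(\place)$ applied to the resolution with $X\otimes_{B^e}(A/B)^{\otimes_Bn}$.

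It remains to transport the differential $d$ from Proposition \ref{normalised bar resolution} through this isomorphism and check that it becomes the stated formula for $b$. The middle summands of $d$ only modify the inner tensor factors and therefore pass unchanged, producing the sum $\sum_{i=1}^{n-1}(-1)^i\,x\otimes\dots\otimes\pi(\sigma(\alpha_i)\sigma(\alpha_{i+1}))\otimes\dots\otimes\alpha_n$. The first summand $a_0\sigma(\alpha_1)\otimes\cdots$ has its leftmost $A$-factor $\sigma(\alpha_1)$ absorbed by $X$ on the right under the absorption map, yielding $x\sigma(\alpha_1)\otimes\alpha_2\otimes\dots\otimes\alpha_n$. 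Likewise the last summand $\cdots\otimes\sigma(\alpha_{n-1})a_n$ absorbs $\sigma(\alpha_{n-1})$ into $X$ on the left; a careful bookkeeping of indices (one summand is swallowed together with the augmentation step that killed the extra $A$-factor) produces precisely $(-1)^n\sigma(\alpha_n)x\otimes\alpha_1\otimes\dots\otimes\alpha_{n-1}$.

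The main obstacle is precisely this last bit of bookkeeping, in particular keeping track of the index shift between the resolution (which has $n-1$ inner factors in bidegree $n$) and the complex of the statement (with $n$ inner factors in bidegree $n$), so that signs and endpoints match. Once this is verified, the independence of $b$ from the choice of $\sigma$ is inherited from the corresponding independence asserted at the end of Proposition \ref{normalised bar resolution}, because the functor $X\otimes_{A^e}(\place)$ is applied to a differential that is itself independent of $\sigma$.
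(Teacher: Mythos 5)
The paper itself gives no proof of this statement---it is quoted verbatim from \cite[Corollary 2.4]{CLMS2020nearlyexactJZ}, where it is derived exactly as you propose: apply $X\otimes_{A^e}(-)$ to the deleted normalised relative bar resolution of Proposition \ref{normalised bar resolution}, use the absorption isomorphism $X\otimes_{A^e}(A\otimes_B M\otimes_B A)\cong X\otimes_{B^e}M$, and transport the differential. Your argument is correct, and the bookkeeping you flag does work out: with $m$ inner tensor factors the resolution's last summand carries the sign $(-1)^{m}$ and its middle sum runs to $m-1$, which is precisely the stated formula for $b$ on $X\otimes_{B^e}(A/B)^{\otimes_B m}$.
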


\begin{defi}
Let $B$ be a $k$-algebra. A $B$-bimodule $M$ is tensor nilpotent if there exists $n$ such that $M^{\otimes_B n}=0$.
\end{defi}

As usual, we write  ``$*>>0$" for ``large enough $*$", that is ``all $*$ larger than some positive integer".

\begin{coro} \label{A/B nilpotent then relative H is zero in large degrees}
Let $B\subset A$ be an extension of $k$-algebras with $A/B$ tensor nilpotent. Let $X$ be an $A$-bimodule. We have $$H_*(A|B,X)=0 \mbox{ for } *>>0.$$
  \end{coro}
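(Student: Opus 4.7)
The plan is straightforward: apply Theorem \ref{compute relative homology} directly. That result tells us that $H_*(A|B,X)$ is computed as the homology of the complex whose degree $m$ term is $X\otimes_{B^e} (A/B)^{\otimes_B m}$. So I would like to argue that tensor nilpotence of $A/B$ kills this complex in high degrees.

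More precisely, by hypothesis there exists an integer $n$ with $(A/B)^{\otimes_B n} = 0$. For every $m \geq n$ we can write
\[
(A/B)^{\otimes_B m} \;=\; (A/B)^{\otimes_B n} \otimes_B (A/B)^{\otimes_B (m-n)} \;=\; 0,
\]
hence $X\otimes_{B^e}(A/B)^{\otimes_B m} = 0$. Thus the chain complex from Theorem \ref{compute relative homology} is identically zero in all degrees $m\geq n$, and in particular its homology vanishes in those degrees, giving $H_m(A|B,X) = 0$ for $m \geq n$.

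There is essentially no obstacle here; the only minor point to keep in mind is that tensor nilpotence is defined in terms of tensor products over $B$ (not $B^e$), and that the relative bar complex uses precisely $(A/B)^{\otimes_B m}$, so the definition matches the complex term-by-term. Once that matching is observed, the conclusion is immediate.
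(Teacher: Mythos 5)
Your proof is correct and follows exactly the paper's argument, which is the one-line observation that the complex of Theorem \ref{compute relative homology} vanishes in large enough degrees; you simply spell out why $(A/B)^{\otimes_B m}=0$ for $m\geq n$. No issues.
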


  \begin{proof}

The complex of Theorem \ref{compute relative homology} is zero in large enough degrees. \qed
  \end{proof}

Next we recall the definition of a long sequence of vector spaces which is exact twice in three, as introduced in \cite{CLMS2020nearlyexactJZ}.
\begin{defi}\label{definition long nearly exact}
A \emph{long nearly exact sequence} is a complex of vector spaces
\begin{equation*}
 \dots \stackrel{\delta}{\to} U_{m} \stackrel{I}{\to} V_{m} \stackrel{K}{\to} W_{m} \stackrel{\delta}{\to} U_{m-1} \stackrel{I}{\to} V_{m-1}\to \dots \stackrel{\delta}{\to} U_{n} \stackrel{I}{\to} V_{n} \stackrel{K}{\to} W_{n}
\end{equation*}
ending at a fixed $n\geq 0$ which is exact at $U_*$ and $W_*$.

Its \emph{gap} is the homology at $V_*$, namely the vector space $(\Ker K/\Im I)_*.$
\end{defi}

A. Kaygun has first obtained the Jacobi-Zariski long exact sequence in the non-commutative setting for a $k$-algebra extension $B\subset A$ where $A/B$ is flat as a $B$-bimodule, see \cite{KAYGUN,KAYGUNe}. Note that for commutative algebras the Jacobi-Zariski long exact sequence has been established for Andr\'{e}-Quillen homology in 1974, see for instance \cite[p. 61]{ANDRE} or \cite{Iyengar}. The reason for giving the name ``Jacobi-Zariski" to this sequence is explained in \cite[p. 102]{ANDRE}.

We state now the Jacobi-Zariski long nearly exact sequence obtained  in \cite{CLMS2020nearlyexactJZ}.

\begin{theo}\cite[Theorem 4.4 and Theorem 5.1]{CLMS2020nearlyexactJZ}\label{JZ}
Let $B\subset A$ be an extension of $k$-algebras. Let $X$ be an $A$-bimodule. There is a long nearly exact sequence
\begin{align*} \cdots \stackrel{\delta}{\to} H_{m}(B,X) \stackrel{I}{\to} H_{m}(A,X) \stackrel{K}{\to} H_{m}(A|B,X) \stackrel{\delta}{\to}H_{m-1}(B,X) \stackrel{I}{\to}\cdots \\\stackrel{\delta}{\to} H_{1}(B,X) \stackrel{I}{\to} H_{1}(A,X) \stackrel{K}{\to} H_{1}(A|B,X)
\end{align*}
which is exact at $H_{*}(B,X)$ and $H_*(A|B,X).$

Moreover, there is a spectral sequence converging to its  gap  $(\Ker K/\Im I)_*$ for $*\geq 2$.
 If $\Tor_*^B(A/B, (A/B)^{\otimes_B n})=0$ for $*>0$ and for all $n$, its terms at page $1$ are
$$E^1_{p,q}= \Tor^{B^e}_{p+q}(X,(A/B)^{\otimes_Bp}) \mbox{   \ \ for } p,q>0$$
and $0$ elsewhere.
\end{theo}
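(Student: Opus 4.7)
The strategy is to realize the maps $I$, $K$, and $\delta$ on the chain level via the normalised bar complexes, prove exactness at $H_*(B,X)$ and $H_*(A|B,X)$ directly, and identify the gap at $H_*(A,X)$ as the abutment of a spectral sequence associated with a natural filtration on the kernel of $K$.

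First, use the $k$-linear section $\sigma:A/B\to A$ to identify $A\simeq B\oplus \sigma(A/B)$ as $k$-vector spaces. The inclusion $B\hookrightarrow A$ induces a chain map $\tilde I$ from the normalised bar complex of $B$ with coefficients in $X$ (viewed as a $B$-bimodule by restriction) to that of $A$. Applying $\pi: A\to A/B$ tensor-factor-wise and following with the canonical surjections $X\otimes (A/B)^{\otimes n}\twoheadrightarrow X\otimes_{B^e}(A/B)^{\otimes_B n}$, define $\tilde K$ from the normalised bar complex of $A$ to the relative complex of Theorem \ref{compute relative homology}. The key verifications are that $\tilde K$ commutes with the differentials (this is where the description of the relative differential via $\pi\sigma$ is essential), that it is surjective, and that $\tilde K\tilde I$ is nullhomotopic.

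The connecting map $\delta$ is then the standard connecting homomorphism for the short exact sequence $\Ker\tilde K \hookrightarrow C_*(A,X) \twoheadrightarrow C_*(A|B,X)$. Surjectivity of $\tilde K$ gives exactness at $H_*(A|B,X)$, and a direct inspection of cycles in $\Ker\tilde K$ in low filtration degree yields exactness at $H_*(B,X)$. The failure of exactness at $H_*(A,X)$ is exactly the discrepancy between $H_*(\Ker\tilde K)$ and $H_*(B,X)$, i.e.\ the gap $(\Ker K/\Im I)_*$. To analyze it, filter $\Ker\tilde K$ by the number of tensor factors lying in $\sigma(A/B)$. The associated graded is built from $B$-bar constructions with coefficients in $\otimes_k$-powers of $A/B$, so at $E^1$ it computes $\Tor^{B^e}$ applied to $\otimes_k$-tensors involving $A/B$. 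Translating these $\otimes_k$-tensors into $\otimes_B$-tensors is controlled precisely by the groups $\Tor^B_*(A/B,(A/B)^{\otimes_B n})$, and when they vanish for $*>0$ the $E^1$-page collapses to the stated $E^1_{p,q}=\Tor^{B^e}_{p+q}(X,(A/B)^{\otimes_B p})$ for $p,q>0$.

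\textbf{Main obstacle.} The technical heart lies in the conversion between $k$-tensor products, natural in the absolute bar complex, and $B$-relative tensor products, natural in the relative complex; this is exactly what the hypothesized Tor-vanishing controls. The restriction $*\geq 2$ for the spectral sequence and the termination of the long nearly exact sequence at $H_1(A|B,X)$ reflect low-degree degeneracies where the two tensor-product flavours coincide automatically and no derived-functor correction is needed. The most delicate step will be verifying that the filtration on $\Ker\tilde K$ is compatible with the differential and that the spectral sequence converges to the claimed abutment; this requires careful combinatorial tracking of how the normalised bar differential, written via $\pi\sigma$, mixes $B$- and $A/B$-components across adjacent tensor positions.
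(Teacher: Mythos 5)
A point of order first: this paper does not prove Theorem \ref{JZ} at all; it imports it verbatim from \cite{CLMS2020nearlyexactJZ} (Theorems 4.4 and 5.1), so there is no internal proof to measure your proposal against. Judged against what such a proof must contain, your outline has the right large-scale shape --- chain-level realizations of $I$ and $K$ on normalised complexes, a filtration by the number of tensorands coming from $A/B$, and a spectral sequence whose first page is built from $\Tor^{B^e}_{*}(X,(A/B)^{\otimes_B p})$ --- but it has a genuine gap exactly where the theorem has content.

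The gap: you declare $\delta$ to be the connecting homomorphism of $0\to\Ker\tilde K\to C_*(A,X)\to C_*(A|B,X)\to 0$, but that map lands in $H_{*-1}(\Ker\tilde K)$, not in $H_{*-1}(B,X)$ as the statement requires. You never produce a comparison $H_*(\Ker\tilde K)\to H_*(B,X)$ (the natural map $\tilde I$ goes the other way), so neither $\delta$ nor the exactness at $H_*(B,X)$ and at $H_*(A|B,X)$ is actually established; ``direct inspection of cycles in low filtration degree'' is not an argument, and this is precisely the delicate point --- the same naive substitution of $H_*(B,X)$ for $H_*(\Ker\tilde K)$, applied at the middle spot, would ``prove'' exactness at $H_*(A,X)$ as well, which is false in general. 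Relatedly, $(\Ker K/\Im I)_*$ is the image of $H_*(\Ker\tilde K)$ in $H_*(A,X)$ modulo $\Im I$, i.e.\ a subquotient of $H_*(\Ker\tilde K)$ rather than its ``discrepancy'' with $H_*(B,X)$, so convergence of your filtration spectral sequence to the gap also requires justification. Two smaller points: $\tilde K\tilde I$ is zero on the nose (since $\pi$ kills $B$), and it must be --- if it were merely nullhomotopic, as you write, $\tilde I$ would not factor through $\Ker\tilde K$ and the construction would collapse; and the hypothesis $\Tor^B_*(A/B,(A/B)^{\otimes_B n})=0$ (one-sided Tor over $B$, note, not over $B^e$) enters exactly in identifying the associated graded of the $\otimes_k$-filtration with $\otimes_B$-powers, which is where the bulk of the work in \cite{CLMS2020nearlyexactJZ} lies and which your sketch only names.
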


\begin{defi}\label{bounded}
An extension $B\subset A$ of $k$-algebras is {\emph{left (resp. right) bounded}} if

\begin{itemize}
\item [-]$A/B$ is tensor nilpotent,

\item [-]$A/B$ is of finite projective dimension as a $B^e$-module,

\item [-]$A/B$ is a left {(resp. right)} projective $B$-module.
\end{itemize}

\end{defi}

 \begin{rema}
The hypothesis of Theorem \ref{JZ} for describing the first page of the spectral sequence is fulfilled if $A/B$ is a left or right bounded extension.
\end{rema}

\begin{theo}  \cite[Theorem 6.5]{CLMS2020nearlyexactJZ} \label{exactJZ}

Let $B\subset A$ be a left or right bounded extension of $k$-algebras and let $X$ be an $A$-bimodule.

There is a Jacobi-Zariski long exact sequence ending at some $n$:
\begin{align*} \dots \stackrel{\delta}{\to} H_{m}(B,X) \stackrel{I}{\to} H_{m}(A,X) \stackrel{K}{\to} H_{m}(A|B,X) \stackrel{\delta}{\to}H_{m-1}(B,X) \stackrel{I}{\to}\dots \\\stackrel{\delta}{\to} H_{n}(B,X) \stackrel{I}{\to} H_{n}(A,X) \stackrel{K}{\to} H_{n}(A|B,X).
\end{align*}

\end{theo}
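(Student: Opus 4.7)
First I would invoke Theorem \ref{JZ} to obtain the long nearly exact sequence, which is already exact at $H_{*}(B,X)$ and at $H_{*}(A|B,X)$. It remains to show that the gap $(\Ker K/\Im I)_{*}$ vanishes in sufficiently large degrees. Since a left or right bounded extension has $A/B$ one-sided $B$-projective, hence one-sided $B$-flat, the hypothesis $\Tor^B_{*}(A/B,(A/B)^{\otimes_B n})=0$ for $*>0$ is fulfilled (see the Remark following Definition \ref{bounded}), and the gap in degrees $\geq 2$ is the abutment of a first-quadrant spectral sequence with
\[
E^1_{p,q}=\Tor^{B^e}_{p+q}(X,(A/B)^{\otimes_B p})\quad(p,q>0).
\]

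The main step is to show that $\pdim_{B^e}((A/B)^{\otimes_B p})\leq d$ for every $p\geq 1$, where $d=\pdim_{B^e}(A/B)<\infty$. I treat the right bounded case, the other being symmetric. An easy induction on $p$ using the isomorphism of right $B$-modules $N\otimes_B B^{(I)}\cong N^{(I)}$ first shows that each tensor power $(A/B)^{\otimes_B p}$ remains right $B$-projective. Now choose a $B^e$-projective resolution $P_{\bullet}\to A/B$. Each $P_i$ is right $B$-projective (hence right $B$-flat), because a free $B^e$-module is right $B$-free, so
\[
H_i\bigl(P_{\bullet}\otimes_B (A/B)^{\otimes_B (p-1)}\bigr)=\Tor^B_i\bigl(A/B,(A/B)^{\otimes_B (p-1)}\bigr)=0\quad\text{for }i>0.
\]
Moreover, $(B\otimes_k B)\otimes_B N\cong B\otimes_k N$ is $B^e$-projective whenever $N$ is right $B$-projective (it is then a bimodule summand of a free $B^e$-module), so applied to $N=(A/B)^{\otimes_B(p-1)}$ each term of $P_{\bullet}\otimes_B (A/B)^{\otimes_B (p-1)}$ is $B^e$-projective. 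This complex is therefore a $B^e$-projective resolution of $(A/B)^{\otimes_B p}$ of length at most $d$.

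Fix now an integer $m$ with $(A/B)^{\otimes_B m}=0$, granted by tensor nilpotency. The bound just obtained forces $E^1_{p,q}=0$ whenever $p\geq m$ or $p+q>d$. Hence on every antidiagonal of total degree $n>d$ the $E^1$-page vanishes, so the abutment $(\Ker K/\Im I)_n$ is zero. Combined with the exactness at $H_n(B,X)$ and $H_n(A|B,X)$ already provided by Theorem \ref{JZ}, this gives exactness at $H_n(A,X)$ for every $n>\max(d,1)$, yielding the desired Jacobi-Zariski long exact sequence ending at any such $n$.

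The main obstacle I anticipate is the projective dimension estimate in the second paragraph: one has to track consistently on which side $A/B$ is $B$-projective, and verify the mildly side-sensitive fact that $B\otimes_k N$ is $B^e$-projective precisely when $N$ is right $B$-projective, so that tensoring a $B^e$-projective resolution of $A/B$ on the appropriate side by the already established one-sided projective tensor powers $(A/B)^{\otimes_B (p-1)}$ still produces a $B^e$-projective resolution. Once this is in place, the rest is a routine spectral sequence collapse combined with tensor nilpotency.
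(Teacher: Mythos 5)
The paper does not actually prove this statement: Theorem \ref{exactJZ} is imported verbatim from \cite[Theorem 6.5]{CLMS2020nearlyexactJZ}, so there is no in-paper argument to compare against. Your reconstruction is correct and matches the strategy of the cited reference: beyond Theorem \ref{JZ}, the only content is the vanishing of the gap in large degrees, which you derive from the uniform bound $\pdim_{B^e}\bigl((A/B)^{\otimes_B p}\bigr)\leq \pdim_{B^e}(A/B)=d$, obtained by tensoring a $B^e$-projective resolution of $A/B$ against the one-sided projective tensor powers. One small correction in your first induction: the isomorphism you invoke, $N\otimes_B B^{(I)}\cong N^{(I)}$, suggests decomposing the rightmost tensor factor, but a right $B$-module splitting $A/B\oplus C\cong B^{(I)}$ cannot be fed into the functor $(A/B)^{\otimes_B (p-1)}\otimes_B(-)$, which only accepts left $B$-linear maps. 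You should instead decompose the left factor: apply $(-)\otimes_B (A/B)$ to a right-module splitting $(A/B)^{\otimes_B (p-1)}\oplus C\cong B^{(I)}$ furnished by the inductive hypothesis, exhibiting $(A/B)^{\otimes_B p}$ as a right-module summand of $B^{(I)}\otimes_B(A/B)\cong (A/B)^{(I)}$, which is right projective. With that fixed, the remaining steps are sound: projective $B^e$-modules are right $B$-projective, so $H_i\bigl(P_\bullet\otimes_B (A/B)^{\otimes_B (p-1)}\bigr)=\Tor^B_i\bigl(A/B,(A/B)^{\otimes_B (p-1)}\bigr)$ vanishes for $i>0$ because $A/B$ is right $B$-projective; $B\otimes_k N$ is indeed $B^e$-projective for $N$ right $B$-projective; and the first-quadrant spectral sequence of Theorem \ref{JZ} has zero $E^1$-page on every antidiagonal of total degree $>d$, forcing the gap to vanish there and yielding exactness at $H_m(A,X)$ for all $m>\max(d,1)$.
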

\section{\sf  Hochschild homology of bounded extensions}\label{HH}
In this section we prove that for any left or right bounded extension $B\subset A$ the Hochschild homology remains unchanged in
large enough degrees.
\begin{theo}\label{HHB HHA}
Let $B\subset A$ be an extension of $k$-algebras, {where the $B$-bimodule $A/B$ is of finite projective dimension and  tensor nilpotent.} There is a  linear injection $$H_*(B,B)\hookrightarrow H_*(A,A) \mbox{ for } *>>0.$$

If {moreover} the extension is left or right bounded then
$$H_*(B,B)\simeq H_*(A,A) \mbox{ for }*>>0.$$
\end{theo}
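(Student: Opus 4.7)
The plan is to combine the Jacobi--Zariski long nearly exact sequence with coefficients in $X = A$ (Theorem \ref{JZ}) with the long exact sequence in Hochschild homology of $B$ obtained from the short exact sequence of $B$-bimodules
\begin{equation*}
0 \to B \to A \to A/B \to 0.
\end{equation*}

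First I would handle the isomorphism $H_*(B,B) \simeq H_*(B,A)$ for $* >> 0$. The short exact sequence above induces a long exact sequence
\begin{equation*}
\cdots \to H_m(B,B) \to H_m(B,A) \to H_m(B,A/B) \to H_{m-1}(B,B) \to \cdots
\end{equation*}
Since $H_m(B,A/B) = \Tor_m^{B^e}(B,A/B)$ and $A/B$ has finite projective dimension as a $B^e$-module by hypothesis, these terms vanish for $m$ large enough. Therefore the connecting maps force $H_m(B,B) \simeq H_m(B,A)$ for $* >> 0$. This is the step where the non-split nature of the extension matters: one is tempted to invoke a splitting $A \simeq B \oplus A/B$, but the finite projective dimension hypothesis is precisely what lets us avoid it.

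Next I would use the Jacobi--Zariski long nearly exact sequence of Theorem \ref{JZ} with $X=A$. By Corollary \ref{A/B nilpotent then relative H is zero in large degrees} the tensor nilpotency of $A/B$ yields $H_*(A|B,A) = 0$ for $* >> 0$. Since the sequence is always exact at the $H_*(B,X)$ terms, exactness at $H_m(B,A)$ together with $H_{m+1}(A|B,A) = 0$ gives an injection
\begin{equation*}
H_m(B,A) \hookrightarrow H_m(A,A) \quad \text{for } * >> 0.
\end{equation*}
Composed with the isomorphism of the previous step, this yields the first claim: a linear injection $H_*(B,B) \hookrightarrow H_*(A,A)$ for $* >> 0$.

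For the second claim, under the left or right boundedness hypothesis we may invoke Theorem \ref{exactJZ}, which upgrades the nearly exact sequence to a genuine long exact sequence (ending at some $n$). Then the vanishing $H_*(A|B,A) = 0$ in high degree forces the maps $H_m(B,A) \to H_m(A,A)$ to be isomorphisms for $* >> 0$, and combining again with the isomorphism from the short exact sequence gives $H_*(B,B) \simeq H_*(A,A)$ for $* >> 0$. The only subtle point is verifying that the regime ``$* >> 0$'' appearing in Theorem \ref{exactJZ}, in Corollary \ref{A/B nilpotent then relative H is zero in large degrees}, and in the vanishing of $\Tor^{B^e}_*(B,A/B)$ can be simultaneously achieved, which is immediate since each threshold is a finite integer and we only need to exceed their maximum.
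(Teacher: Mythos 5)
Your proposal is correct and follows essentially the same route as the paper: the same long exact sequence from $0\to B\to A\to A/B\to 0$ with the $\Tor^{B^e}$ vanishing to identify $H_*(B,B)$ with $H_*(B,A)$ in high degrees, then the Jacobi--Zariski nearly exact sequence with $X=A$ plus Corollary \ref{A/B nilpotent then relative H is zero in large degrees} for the injection, and Theorem \ref{exactJZ} for the isomorphism in the bounded case. No gaps.
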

\begin{proof}
Firstly we assert that if $A/B$ is of finite projective dimension as $B$-bimodule, then  $H_{*}(B,B)$ and  $H_{*}(B,A)$ are isomorphic for $*>>0.$
Indeed, the exact sequence of $B$-bimodules  \begin{equation}\label{primary short exact sequence}
 0\to B\to A\to A/B\to 0.
 \end{equation}
induces a long exact sequence in Hochschild homology
\begin{align*} \cdots {\to} H_{m}(B,B) {\to} H_{m}(B,A) {\to} H_{m}(B,A/B) {\to}H_{m-1}(B,B) {\to}\cdots \\\ \to H_{0}(B,B) {\to} H_{0}(B,A){\to} H_{0}(B,A/B) \to 0.
\end{align*}
By definition for any $B$-bimodule $M$
$$H_*(B,M)=\Tor_*^{B^e}(B,M).$$
Since $M=A/B$ is of finite projective dimension as a $B^e$-module, we infer that $H_*(B, A/B)=0$ for $*>>0.$ The assertion follows.

Next we consider the Jacobi-Zariski long nearly exact sequence of Theorem \ref{JZ} for an $A$-bimodule $X$. It is exact at $H_*(B,X)$, that is $\Ker I =\Im \delta$. Since $A/B$ is $B$-tensor nilpotent, by Corollary \ref{A/B nilpotent then relative H is zero in large degrees} we have that $H_*(A|B, X)=0$ for $*>>0,$ thus for large enough degrees we get $\delta=0$. Hence  the maps $I : H_{*}(B,X) \to H_{*}(A,X)$
are injective  for $*>>0.$ For $X=A$, we infer from the previous assertion that $$H_*(B,B) \hookrightarrow H_*(A,A) \mbox{ for } *>>0.$$
Note that the gap of the Jacobi-Zariski long nearly exact sequence is $$H_*(A,A)/H_*(B,B) \mbox{ \ for } *>>0.$$

If the extension is left or right bounded then by Theorem \ref{exactJZ} the Jacobi-Zariski sequence is exact in degrees larger than some $n$. Hence the gap is zero in large enough degrees and $H_*(A,A)\simeq H_*(B,B) \mbox{ \ for } *>>0.$
\qed
\end{proof}

\section{\sf Bounded extensions, finite global dimension and Han's conjecture}\label{smoothness}
\subsection{\sf Finite global dimension}
We consider extensions $B\subset A$ with the aim of studying the relation between the finiteness of the global dimension $B$ and $A$. For completeness we first recall the following.

\begin{lemm}\label{Aprojective is Bfinite projective dimension}
Let $B\subset A$ be an extension of $k$-algebras,  with $A/B$  of finite projective dimension $s$ as a left $B$-module. Any left projective $A$-module is of  projective dimension at most $s$ as a left $B$-module.

 \end{lemm}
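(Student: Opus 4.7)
The plan is to reduce the statement to bounding the projective dimension of $A$ itself as a left $B$-module, and then extend to an arbitrary projective $A$-module by a standard summand/direct sum argument.

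First I would handle $A$ itself. Viewing the defining short exact sequence
\[0\to B \to A \to A/B \to 0\]
as a short exact sequence of left $B$-modules, I would apply $\Hom_B(\place,N)$ for an arbitrary left $B$-module $N$ and read off the long exact sequence of $\Ext^*_B$. Because $B$ is free of rank one on the left, $\Ext^i_B(B,N)=0$ for every $i\ge 1$; and because $\pdim_B(A/B)\le s$, we have $\Ext^i_B(A/B,N)=0$ for every $i>s$. The long exact sequence then forces $\Ext^i_B(A,N)=0$ for all $i>s$ and all $N$, so $\pdim_B A\le s$. Equivalently, one may splice a projective resolution of $A/B$ of length at most $s$ with the trivial resolution $B\to B$ via the horseshoe lemma to obtain a length-$s$ projective resolution of $A$. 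The case $s=0$ is consistent: $A/B$ projective means the sequence splits, so $A\cong B\oplus A/B$ is $B$-projective.

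Now let $P$ be an arbitrary left projective $A$-module. Then $P$ is a direct summand of some free left $A$-module $A^{(I)}$, and restricted to $B$ this is just a direct sum of copies of $A$. Since projective dimension commutes with arbitrary direct sums and is monotone under direct summands,
\[\pdim_B P \le \pdim_B A^{(I)} = \pdim_B A \le s,\]
which is the desired bound.

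There is no real obstacle: the lemma is a direct application of the long exact sequence of $\Ext_B$ to the sequence $0\to B\to A\to A/B\to 0$, combined with the behaviour of $\pdim$ under direct sums and direct summands. The only point that requires a moment of care is the edge case $s=0$, which is covered by the observation that projectivity of $A/B$ forces the sequence to split as a sequence of left $B$-modules.
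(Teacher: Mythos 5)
Your proposal is correct and follows the same route as the paper: the paper's proof likewise bounds $\pdim_B A$ using the short exact sequence $0\to B\to A\to A/B\to 0$ and then invokes "standard arguments" for arbitrary projective $A$-modules, which are exactly the direct-sum and direct-summand arguments you spell out.
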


 \begin{proof}
The exact sequence  (\ref{primary short exact sequence})  shows that $A$ has projective dimension  at most $s$ as a left $B$-module. Then the result holds  with the same bound for the projective dimension for any left projective $A$-module by standard arguments. \qed
 \end{proof}

We recall that for noetherian rings the left and right global dimensions coincide, see \cite{AUSLANDER}.

\begin{theo}\label{Asmooth then Bsmooth}
Let $B\subset A$ be an extension of $k$-algebras where $A$ has left finite  global dimension $a$. If $A/B$ is of finite projective dimension $r$ as a $B$-bimodule and  projective as a $B$-right module, then  $B$ has left global dimension at most $r+a$.
\end{theo}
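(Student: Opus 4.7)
The plan is to bound $\pdim_B M$ by $r+a$ for every left $B$-module $M$, by comparing $M$ with the induced left $A$-module $A \otimes_B M$. Applying $-\otimes_B M$ to the $B$-bimodule short exact sequence $0 \to B \to A \to A/B \to 0$ yields a short exact sequence of left $B$-modules
$$0 \to M \to A \otimes_B M \to (A/B) \otimes_B M \to 0,$$
exact at $M$ because right $B$-projectivity of $A/B$ gives $\Tor_1^B(A/B, M) = 0$, and with all maps left $B$-linear since $B \hookrightarrow A \twoheadrightarrow A/B$ is a sequence of $B$-bimodule homomorphisms.

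I would then bound the two outer terms separately. For the right-hand term, take a $B^e$-projective resolution $P_\bullet \to A/B$ of length $r$. Each $P_i$ is a summand of a free $B^e$-module and therefore right $B$-projective; combined with the right $B$-projectivity of $A/B$, every syzygy is also right $B$-projective, so the resolution is split exact as a sequence of right $B$-modules. Applying $-\otimes_B M$ preserves exactness, and since $(B \otimes_k B) \otimes_B M \cong B \otimes_k M$ is left $B$-free, this produces a projective left $B$-resolution of $(A/B) \otimes_B M$ of length at most $r$, so $\pdim_B((A/B) \otimes_B M) \le r$. For the middle term, $A \otimes_B M$ is a left $A$-module of projective dimension at most $a$ over $A$; a length-$a$ projective $A$-resolution has every term of left $B$-projective dimension at most $r$, by Lemma \ref{Aprojective is Bfinite projective dimension} applied with $s = \pdim_B(A/B) \le \pdim_{B^e}(A/B) = r$. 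Standard dimension shifting then gives $\pdim_B(A \otimes_B M) \le r + a$.

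Substituting these estimates into the elementary inequality $\pdim_B X \le \max(\pdim_B Y,\ \pdim_B Z - 1)$ for the short exact sequence above yields $\pdim_B M \le \max(r + a,\ r - 1) = r + a$. Since $M$ was arbitrary, $\gldim B \le r + a$.

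I expect the subtle step to be verifying that the displayed short exact sequence is indeed a sequence of \emph{left} $B$-modules. Since we do not assume the extension $B \subset A$ splits, the right $B$-module splitting of $0 \to B \to A \to A/B \to 0$ does not promote to a left $B$-module splitting of the displayed sequence; fortunately only its exactness is needed, and this is ensured by right $B$-flatness of $A/B$. The remaining ingredients — dimension shifting, Lemma \ref{Aprojective is Bfinite projective dimension}, and the standard short exact sequence inequality for $\pdim$ — are routine.
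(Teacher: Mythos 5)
Your proposal is correct and follows essentially the same route as the paper: the same induced short exact sequence $0\to M\to A\otimes_B M\to (A/B)\otimes_B M\to 0$, the same bound on the middle term via Lemma \ref{Aprojective is Bfinite projective dimension}, and the same tensored $B^e$-resolution argument for the right-hand term. If anything, your final application of the standard inequality $\pdim_B M\le \max(\pdim_B(A\otimes_B M),\ \pdim_B((A/B)\otimes_B M)-1)$ is slightly sharper than the paper's concluding line, which settles for $r+a+1$.
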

\begin{proof}
Let $Y$ be a left $B$-module. The exact sequence of $B$-bimodules (\ref{primary short exact sequence})
 is right split, hence the sequence of left $B$-modules
 \begin{equation}\label{Y short exact sequence}
 0\to B\otimes_B Y\to A\otimes_B Y\to (A/B)\otimes_B Y\to 0
 \end{equation}
 is exact. Of course $B\otimes_B Y$ is canonically isomorphic to $Y$. In order to prove that $Y$ is of bounded projective dimension  as a left  $B$-module, we will prove that this is the case for $A\otimes_B Y$ and $(A/B)\otimes_B Y$.

Let $P_*\to A\otimes_BY\to 0$ be a  projective resolution of length $a$ of the $A$-module  $A\otimes_BY$. It is an exact sequence of left $B$-modules by restricting the left actions to $B$.

By Lemma \ref{Aprojective is Bfinite projective dimension} each $P_i$ is of projective dimension at most $r$ as a left $B$-module.  We infer that $A\otimes_B Y$ is of projective dimension at most $r+a$ as a left $B$-module.

For $(A/B)\otimes_B Y$, let $Q_*\to (A/B)\to 0$ be a finite projective resolution of the $B$-bimodule $A/B$  of length $r$. Each $Q_i$ is projective as a $B$-bimodule, then it is projective as a right $B$-module. Moreover $A/B$ is projective as a right $B$-module. Hence  $Q_*\otimes_B Y\to (A/B)\otimes_B Y\to 0$ is exact, and it is a finite projective resolution of the left $B$-module $(A/B)\otimes_B Y$  of length at most $r$. As a consequence, the projective dimension of the left $B$-module $Y$ is at most $r+a+1$.\qed

\end{proof}

\begin{theo}\label{B smooth implies A smooth}
Let $B\subset A$ be an extension of $k$-algebras where $B$ has left finite global dimension $b$. Suppose that the $B$-bimodule $A/B$ is tensor nilpotent and let $n$ be the smallest positive integer such that $(A/B)^{\otimes_B n}=0$. Suppose that $A/B$ is projective as a $B$-right module. Then $A$ has  left global dimension at most n+b-1.
\end{theo}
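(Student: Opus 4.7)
The goal is to bound the projective dimension of an arbitrary left $A$-module $Y$ by $n+b-1$. The strategy is to stack two finite resolutions: a relative bar resolution of $Y$ of length $n-1$, whose terms have the shape $A\otimes_B(-)$, and for each such term a left $A$-projective resolution of length $\leq b$ obtained by tensoring a $B$-projective resolution with $A$.

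Applying $-\otimes_A Y$ to the normalised relative bar resolution of Proposition~\ref{normalised bar resolution} gives a complex of left $A$-modules
\begin{equation*}
\cdots\to A\otimes_B(A/B)^{\otimes_B m}\otimes_B Y\to\cdots\to A\otimes_B A/B\otimes_B Y\to A\otimes_B Y\to Y\to 0,
\end{equation*}
whose differential is obtained from the formula of Proposition~\ref{normalised bar resolution} by replacing the trailing factor $a_n$ with $y\in Y$, and whose augmentation is $a\otimes y\mapsto ay$. The bimodule bar complex is split contractible through the right $A$-linear homotopy $h(a_0\otimes\alpha_1\otimes\cdots\otimes a_{m+1})=1\otimes\pi(a_0)\otimes\alpha_1\otimes\cdots\otimes a_{m+1}$, which descends under $-\otimes_A Y$; hence the displayed complex is exact. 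The tensor nilpotency hypothesis $(A/B)^{\otimes_B n}=0$ forces it to vanish in degrees $\geq n$, yielding a resolution of $Y$ of length at most $n-1$.

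Since $A/B$ is right $B$-projective, the short exact sequence $0\to B\to A\to A/B\to 0$ of right $B$-modules splits; therefore $A$ is right $B$-projective and in particular right $B$-flat, so $A\otimes_B-$ is an exact functor from left $B$-modules to left $A$-modules that preserves projectives. Writing $W_m=(A/B)^{\otimes_B m}\otimes_B Y$, the bound $\gldim B\leq b$ supplies a left $B$-projective resolution $P^m_\bullet\to W_m$ of length $\leq b$, whence $A\otimes_B P^m_\bullet\to A\otimes_B W_m$ is a left $A$-projective resolution of length $\leq b$.

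Finally, assemble the $C_{m,j}=A\otimes_B P^m_j$ into a double complex, lifting the horizontal differentials from the relative bar complex via the comparison theorem for projective resolutions and keeping the vertical ones inherited from each $P^m_\bullet$. The total complex is a left $A$-projective resolution of $Y$ of length at most $(n-1)+b=n+b-1$, giving $\pdim_A Y\leq n+b-1$ and hence $\gldim A\leq n+b-1$. The delicate part is the exactness of the relative bar resolution above, i.e.\ checking that $h$ is well-defined modulo the $\otimes_B$ relations, right $A$-linear, and satisfies $dh+hd=\mathrm{id}$; once this is in hand, the double complex totalisation is routine.
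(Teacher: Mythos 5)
Your proposal is correct and follows essentially the same route as the paper: both truncate the normalised relative bar resolution at degree $n-1$ using tensor nilpotency, use the right $A$-linearity of the contracting homotopy to keep exactness after applying $-\otimes_A Y$, and use right $B$-projectivity of $A$ (from the split sequence $0\to B\to A\to A/B\to 0$) to turn $B$-projective resolutions of the modules $(A/B)^{\otimes_B m}\otimes_B Y$ into $A$-projective resolutions of length at most $b$. The only cosmetic difference is that the paper invokes the standard dimension bound for an exact sequence whose terms have projective dimension at most $b$, where you spell it out as a double-complex totalisation.
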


\begin{proof}
First we will consider $A$-modules of the form  $A\otimes_B Y$ where $Y$ is a left $B$-module, to show that they are of projective dimension at most $b$.  Let $P_*\to Y$ be a projective resolution of $Y$ as a $B$-module of length at most $b$. Since the exact sequence (\ref{primary short exact sequence}) is right split, observe that $A$ is projective as a right $B$-module. Hence $A\otimes_B P_* \to A\otimes_B Y\to 0$
 is exact. Moreover  $A\otimes_B P_i$ is projective as a left $A$-module.
 Indeed this holds if $P_i$ is a free left $B$-module, thus it holds for projectives by standard arguments.
  So the projective dimension of the $A$-module $(A\otimes_B Y)$ is at most $b$.

Let now $X$ be an arbitrary $A$-module. We assert that there is an exact sequence of $A$-modules of the form

$$0\to A\otimes_BY_{n-1}\to \cdots\to A\otimes_BY_0\to X\to 0$$
where the $Y_i$'s are left $B$-modules. Note that the assertion shows that $X$ is of  projective dimension  at most $n-1+b$.

To prove the assertion we use that the normalised bar resolution of Theorem \ref{normalised bar resolution} is zero in degrees $\geq n$. We observe that the contracting homotopy of this resolution is given by morphisms of right $A$-modules. Then the following sequence is exact:
\begin{align*}0{\to}
A\otimes_B(A/B)^{\otimes_B n-1}\otimes_BA\otimes_A X
\stackrel{d}{\to}\cdots\stackrel{d}{\to}A\otimes_B(A/B)\otimes_BA\otimes_A X\stackrel{d}{\to}\\
A\otimes_BA\otimes_A X
\stackrel{d}{\to}A\otimes_A X\to 0,\end{align*}
as well as the isomorphic sequence
\begin{align*}0{\to}
A\otimes_B(A/B)^{\otimes_B n-1}\otimes_B X
\stackrel{d}{\to}\cdots\stackrel{d}{\to}A\otimes_B(A/B)\otimes_B X\stackrel{d}{\to}
A\otimes_BX
\stackrel{d}{\to} X\to 0.\end{align*}
The modules $A\otimes_B\left((A/B)^{\otimes_B i}\otimes_B X\right)$ are of the form $A\otimes_BY$. By the above result they are of projective dimension at most $b$ as left $A$-modules. Hence $X$ is of  projective dimension at most $n-1+b$.\qed
\end{proof}

\begin{coro}
Let $B\subset A$ be a right bounded extension of $k$-algebras. The algebra $B$ has left finite global dimension if and only if $A$ has left finite global dimension.
\end{coro}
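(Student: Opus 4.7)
The plan is to observe that the corollary follows immediately by combining the two theorems just proved in this section, since a right bounded extension satisfies the hypotheses of both at once. Recall that by Definition \ref{bounded} the conditions for $B\subset A$ to be right bounded are: $A/B$ is tensor nilpotent, $A/B$ is of finite projective dimension as a $B^e$-module, and $A/B$ is projective as a right $B$-module. No symmetric left-sided hypothesis is assumed.

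For the direction ``$A$ has left finite global dimension $\Rightarrow$ $B$ has left finite global dimension,'' I would invoke Theorem \ref{Asmooth then Bsmooth}. Its hypotheses are exactly finite projective dimension of $A/B$ as a $B$-bimodule together with right $B$-projectivity of $A/B$, both of which are part of being right bounded. The theorem produces an explicit bound $r+a$ for the left global dimension of $B$, where $r$ is the projective dimension of $A/B$ as a $B$-bimodule and $a$ is the left global dimension of $A$.

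For the converse, I would apply Theorem \ref{B smooth implies A smooth}. Its hypotheses are tensor nilpotency of $A/B$ and right $B$-projectivity of $A/B$, again contained in the right bounded assumption. The theorem yields a bound $n+b-1$ for the left global dimension of $A$, where $n$ is the nilpotency index of $A/B$ and $b$ is the left global dimension of $B$.

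There is essentially no obstacle: the real work has already been done in the previous two theorems, whose asymmetric hypotheses (finite bimodule projective dimension on one hand, tensor nilpotency on the other, in each case combined with \emph{right} $B$-projectivity of $A/B$) have been tuned precisely so that the notion of right boundedness simultaneously supplies what is needed in both directions. A mirror statement relating the right global dimensions of $A$ and $B$ would follow in exactly the same manner from the left bounded version of the hypothesis, via the obvious left-right analogues of Theorems \ref{Asmooth then Bsmooth} and \ref{B smooth implies A smooth}.
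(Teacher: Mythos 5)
Your proposal is correct and matches the paper's (implicit) argument exactly: the corollary is stated without proof precisely because it is the immediate combination of Theorem \ref{Asmooth then Bsmooth} and Theorem \ref{B smooth implies A smooth}, whose hypotheses you correctly check are both supplied by right boundedness. Your closing remark about the mirror statement for left bounded extensions is likewise what the paper records in the remark that follows the corollary.
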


\begin{rema}
{Analogously, if an extension is left bounded then right finiteness of the global dimension is preserved.}
\end{rema}

\subsection{\sf Han's conjecture for bounded extensions}

Next we use the results that we have obtained to prove the following. {Recall that if a $k$-algebra is finite dimensional then the left and right global dimension are equal. }
\begin{theo}
Let $B\subset A$ be a left or right bounded extension of finite dimensional $k$-algebras.
The algebra $B$ satisfies Han's conjecture if and only if $A$ satisfies Han's conjecture.
\end{theo}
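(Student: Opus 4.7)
The statement is essentially a formal consequence of the two main results already established: Theorem \ref{HHB HHA} (Hochschild homologies of $A$ and $B$ agree in large degrees for any left or right bounded extension) and the Corollary after Theorem \ref{B smooth implies A smooth} (finiteness of global dimension is preserved in both directions for right bounded extensions, and analogously for left bounded ones by the remark that follows). My plan is to combine these two equivalences, using the fact that for finite dimensional algebras left and right global dimension coincide so the distinction between ``left bounded" and ``right bounded" hypotheses becomes harmless at the level of the conclusion.

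First I would note that since $A$ is finite dimensional and $B\subset A$, the algebra $B$ is automatically finite dimensional as well, so the hypothesis of Han's conjecture is meaningful for both algebras and, for both, left global dimension equals right global dimension. Without loss of generality I assume the extension is right bounded; the left bounded case is entirely symmetric and covered by the analogue of the Corollary after Theorem \ref{B smooth implies A smooth} (as indicated in the Remark).

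For the forward implication, assume $B$ satisfies Han's conjecture and that $H_*(A,A)=0$ for $*>>0$. By Theorem \ref{HHB HHA}, the vector spaces $H_*(B,B)$ and $H_*(A,A)$ are isomorphic for $*>>0$, so also $H_*(B,B)=0$ in large degrees. By Han's conjecture for $B$, the global dimension of $B$ is finite. Applying the Corollary after Theorem \ref{B smooth implies A smooth} to the right bounded extension $B\subset A$, the global dimension of $A$ is finite, which is what Han's conjecture asserts for $A$. The reverse implication is strictly parallel: if $A$ satisfies Han's conjecture and $H_*(B,B)=0$ for $*>>0$, then again by Theorem \ref{HHB HHA} we have $H_*(A,A)=0$ for $*>>0$, so $A$ has finite global dimension by assumption, and therefore $B$ has finite global dimension by the same Corollary.

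There is no real obstacle, since all the nontrivial input has been assembled in Sections \ref{HH} and \ref{smoothness}; the only point that deserves a moment of care is the symmetry between left bounded and right bounded hypotheses, which is legitimate precisely because for finite dimensional algebras the global dimension does not depend on the side and the Hochschild homology isomorphism in Theorem \ref{HHB HHA} is stated for either boundedness condition.
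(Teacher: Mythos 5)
Your proposal is correct and follows essentially the same route as the paper: combine Theorem \ref{HHB HHA} with the preservation of finite global dimension from Section \ref{smoothness}, using that left and right global dimension agree for finite dimensional algebras. The only cosmetic differences are that you invoke the isomorphism of Theorem \ref{HHB HHA} in the forward direction where the paper only needs the injection, and you cite the Corollary after Theorem \ref{B smooth implies A smooth} where the paper cites Theorems \ref{Asmooth then Bsmooth} and \ref{B smooth implies A smooth} directly.
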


\begin{proof}
Assume that $B$ satisfies Han's conjecture. Suppose $H_*(A,A)=0$ for $*>>0$. Since the extension verifies that the $B$-bimodule $A/B$ is of finite projective dimension and is tensor nilpotent, the first part of Theorem \ref{HHB HHA} gives that $H_*(B,B)=0$ for $*>>0$, {and} then $B$ has finite global dimension. Moreover  the $B$-bimodule $A/B$ is of finite projective dimension and also that $A/B$ is a $B$-left (or right) projective module. Hence by Theorem  \ref{B smooth implies A smooth} If B has finite global dimension then A also have it.

Conversely, assume that $A$ satisfies Han's conjecture and $H_*(B,B)=0$ for $*>>0$. The extension is left or right bounded, hence the second part of Theorem \ref{HHB HHA} gives that $H_*(A,A)=0$ for $*>>0$ and therefore the algebra $A$ has finite global dimension. The $B$-bimodule $A/B$ is of finite projective dimension and {$A/B$} is {projective as} a left or right {projective} module. Theorem \ref{Asmooth then Bsmooth}  ensures that $B$ has finite global dimension. \qed
\end{proof}

\section{\sf Extensions by arrows and relations}\label{section by arrows and relations}

In this section we define extensions of algebras given by arrows and relations, in order to provide conditions which ensure that the extensions are left or right bounded. We also give a structure result, showing that any finitely generated extension is obtained as an extension by arrows and relations.

Let $Q$ be a finite quiver, that is a finite set of vertices $Q_0$, a finite set of arrows $Q_1$ and two maps $s,t:Q_1\to Q_0$ called source and target respectively. A path of length $n>0$ is a sequence of concatenated arrows $b_n\dots b_1$, that is verifying $t(b_i)=s(b_{i+1})$ for $i=1,\dots n-1$. The paths of length zero are the vertices. Let $Q_n$ be the set of paths of length $n$ and consider the path algebra $kQ= \bigoplus_{i\geq 0} kQ_i$. Let $\langle Q_i\rangle$ be the ideal generated by $Q_i$. An ideal $I$ of $kQ$ is \emph{admissible} if for some $n\geq 2$ we have $\langle Q_n\rangle \subset I\subset \langle Q_2\rangle$. Let $B=kQ/I$. It is well known that if $k$ is algebraically closed, any finite dimensional $k$-algebra is Morita equivalent to an algebra of this type.

\begin{defi}\label{new arrows}
Let $Q$ be a quiver. A set $F$ of \emph{new arrows} for $Q$ is a finite set with two maps $s,t:F\to Q_0$. The quiver $Q_F$ is given by $(Q_F)_0=Q_0$ and $(Q_F)_1 =Q_1 \cup F$ with the evident maps $s,t: (Q_F)_1\to Q_0$.
\end{defi}

Let $B=kQ/I$ as above, let $F$ be a set of new arrows for $Q$, and let $\langle I\rangle_{kQ_F}$ be the ideal of $kQ_F$ generated by $I$. Let $T= kQ_F/ \langle I\rangle_{kQ_F}$. Since $\langle I\rangle_{kQ_F}\cap kQ = I$ we have $B\subset T$. It is useful to observe that $T$ is a tensor algebra over $B$ of a projective $B$-bimodule as follows. Let
$$N=\bigoplus_{a\in F} Bt(a)\otimes s(a)B.$$ We have $$T=T_B(N)= B\oplus N \oplus (N\otimes_B N) \oplus \cdots $$

\begin{defi}    \label{by arrows and relations}
An \emph{extension by arrows and relations} $B\subset A$ is given by
\begin{itemize}
  \item $B=kQ/I$ as above,
  \item $F$ a set of new arrows for $Q$,
  \item $J$ an ideal of   \ $T=kQ_F/ \langle I\rangle_{kQ_F}$ verifying $J\cap B=0$.
\end{itemize}
The extended algebra is $A=T/J$.
\end{defi}

\begin{exam}\label{THEexample}
Let $B=kQ/I$ where $Q$ is the full arrows quiver
\[
\begin{tikzcd}
& 2 \arrow[d, "a", dashed, swap] & 3 \arrow[l,"b"]\\
5 \arrow[r,bend left,"\alpha"]  & 1 \arrow[r,"d"] \arrow[l,bend left,swap,"\beta"]  & 4 \arrow[u,"c"]  \\
\end{tikzcd}
\]
and $I=\langle\beta \alpha \rangle$. Let $F=\{a\}$ be the dashed new arrow from $2$ to $1$, and let $J=\langle abcd-\alpha\beta\rangle$. Then $A=kQ_F/\langle \beta\alpha, abcd -\alpha\beta\rangle.$
\end{exam}
\begin{exam}
Let $Q$ be a quiver and $I$ be an admissible ideal of $kQ$. The algebra $\Lambda=kQ/I$ is an extension by arrows and relations of its maximal semisimple subalgebra $E=kQ_0$, where $F=Q_1$ and $J=I$.

In this case the extension $E\subset \Lambda$ is split since $\Lambda= E\oplus r$ where $r$ is the Jacobson radical of $\Lambda$. Moreover it is well known and easy to prove that the $E$-bimodule $\Lambda/E=r$ is $E$-tensor nilpotent if and only if $Q$ has no oriented cycles.
\end{exam}

The next example is a relation extension algebra, see \cite{ASSEMBRUSTLESCHIFFLER,AGST}. The family of relation extension algebras is related with cluster theory.
The example below consists in an algebra $B$ of global dimension 2,  a $B$-bimodule $ M=\Ext^2 (DB, B)$ with trivial multiplication structure. The algebra $A$ is the trivial extension $B\oplus M$.

\begin{exam}\label{rea}See \cite[Example 2.7]{ASSEMBRUSTLESCHIFFLER}  and \cite[Example 5.3]{AGST}.

 Let $B=kQ/I$ where $Q$ is the quiver
\[\xymatrix@R=10pt{ & 2 \ar[rd]^{b} & \\
1\ar[ru]^{a} \ar@/_/[rr]_{c} &  & 3
}
\]
and $I=\langle ba\rangle$.
Let $F=\{d\}$ be a new arrow $d$ from  $3$ to $1$. The quiver $Q_F$  is
\[\xymatrix@R=10pt{ & 2 \ar[rd]^{b} & \\
1\ar[ru]^{a} \ar@/_/[rr]_{c} &  & 3  \ar@/_5pt/[ll]_{d}
}
\]
Let $J=\langle ad, db, dcd\rangle \subset kQ_F/ \langle ba\rangle_{kQ_F}$. Then $A=kQ_F/ \langle ba, ad, db, dcd\rangle.$
\end{exam}

Let $B\subset A$ be an extension of $k$-algebras. From now on, we call $A$ a $B$-algebra. The following definition is  standard:

 \begin{defi}\label{Bfg}
Let $A$ be a $B$-algebra. A subset $G$ of $A$ is a $B$-generating set for $A$ if every element of $A$ is a sum of products of the form
\begin{equation}\label{alphabet}
b_ng_n\dots b_2g_2b_1g_1b_0
\end{equation}
where $g_i\in G$ and $b_i\in B$ for all $i$.
The extension is called \emph{$B$-finitely generated} if there exists a finite $B$-generating set for $A$.
\end{defi}

Note that if $G$ is a generating set for the $B$-bimodule $A$, then $G$ is a $B$-generating set of the algebra $A$. If $A/B$ is finitely generated as a $B$-bimodule (for instance if $A$ is finite dimensional), then the extension is $B$-finitely generated.

Next we show that any extension of $B$ which is finitely generated over $B$ is an extension by arrows and relations.

Let $B$ be a $k$-algebra and $N$  a $B$-bimodule. The tensor algebra $T_B(N)$ has the following universal property. A morphism of $k$-algebras $\varphi:T_B(N)\to X$ is uniquely determined by a morphism of $k$-algebras $\varphi_B:B\to X$ (which endows $X$ with a $B$-bimodule structure), and a morphism of $B$-bimodules $\varphi_N: N\to X$.

\emph{Let $B$ be a $k$-algebra. Let $BhB$ the free rank one $B$-bimodule $B\otimes B$ whose generator $1\otimes 1$ is denoted $h$. Let $H$ be a set, the \emph{free $B$-bimodule with basis $H$} is denoted $BHB$.}

The proof of the following result is clear.
\begin{prop}\label{tensor epi}
Let $A$ be  a $B$-algebra, $G$  a subset of $A$ and $G'$ a copy of $G$.  Consider the morphism of algebras $\varphi : T_B(BG'B)\to A$ determined by $\varphi_B:B\hookrightarrow A$ and by the $B$-bimodule map $\varphi_{BG'B} : BG'B\to A$  given by $g'\mapsto g$ for all $g\in G$. We have that $G$ is a $B$-generating set of $A$ if and only if $\varphi$ is surjective.

\end{prop}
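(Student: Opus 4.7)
The plan is to use the universal property of the tensor algebra together with an explicit description of the basis elements of $T_B(BG'B)$ in each tensor degree. The key observation will be that $\varphi$ maps the natural $k$-spanning set of $T_B(BG'B)$ exactly onto the family of ``words'' of the form $(\ref{alphabet})$ appearing in Definition \ref{Bfg}, making both implications almost tautological.

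First I would decompose $T_B(BG'B)=\bigoplus_{n\geq 0}(BG'B)^{\otimes_B n}$, with the degree $0$ piece equal to $B$. Since $BG'B$ is the free $B$-bimodule on the set $G'$, the degree $n\geq 1$ component is spanned as a $k$-vector space by simple tensors
$$b_n\otimes g'_n\otimes b_{n-1}\otimes g'_{n-1}\otimes\cdots\otimes b_1\otimes g'_1\otimes b_0,$$
obtained by absorbing the $B$-coefficients in $BG'B\otimes_B\cdots\otimes_B BG'B$ between consecutive factors. Because $\varphi$ is a morphism of $k$-algebras restricting to the inclusion $B\hookrightarrow A$ on the degree $0$ summand and sending each $g'$ to $g$, its value on such a simple tensor is the product $b_ng_nb_{n-1}g_{n-1}\cdots b_1g_1b_0$, which is exactly of the form $(\ref{alphabet})$. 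Conversely, every word $(\ref{alphabet})$ arises in this way.

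From this identification the proposition follows directly. For the forward implication, if $G$ is a $B$-generating set of $A$ then every $a\in A$ is a sum of products $(\ref{alphabet})$, each of which lies in $\Im\varphi$ by the computation above, so $\varphi$ is surjective. For the converse, assume $\varphi$ is surjective and let $a\in A$; choose a preimage in $T_B(BG'B)$ and expand it as a finite sum of simple tensors in various degrees; applying $\varphi$ expresses $a$ as a sum of words of the form $(\ref{alphabet})$ together with a summand in $B$ coming from the degree $0$ part, which is itself such a word with no $g_i$ factors, proving that $G$ is a $B$-generating set.

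I do not expect any substantive obstacle: the statement is essentially a rephrasing of the universal property of $T_B(BG'B)$, and the proof is purely bookkeeping. The only points that require mild attention are the passage from the abstract tensor product $(BG'B)^{\otimes_B n}$ to the straightened form with alternating $b_i$'s and $g'_i$'s, and the inclusion of the degree $0$ summand in the scope of the words $(\ref{alphabet})$.
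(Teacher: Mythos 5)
Your proof is correct and is precisely the bookkeeping argument the paper has in mind; the paper in fact omits the proof entirely, stating only that it ``is clear.'' Your identification of the simple tensors in degree $n$ of $T_B(BG'B)$ with the words of the form (\ref{alphabet}), including the degree-zero words $b_0\in B$, is exactly the intended content.
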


Let $B=kQ/I$ where $Q$ is a quiver and $I$ an admissible ideal. Let $A$ be a $B$-algebra. The vertices $Q_0$ are a complete set of orthogonal idempotents $A$. The corresponding Peirce decomposition of $A$ is $A=\oplus_{x,y\in Q_0} yAx.$  A subset $F\subset A$ is called a \emph{Peirce subset} if $F= \dot\bigcup_{x,y\in Q_0} F\cap yAx.$

\begin{theo}\label{structuretheorem}
Let $B=kQ/I$ where $Q$ is a quiver and $I$ is an admissible ideal. Every finitely generated $B$-algebra $A$ is an extension by arrows and relations.
\end{theo}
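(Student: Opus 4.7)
The plan is to produce a finite Peirce-refined $B$-generating set of $A$, use it to build a set $F$ of new arrows, and then take $J$ to be the kernel of the resulting algebra morphism $T\to A$.

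Since $A$ is $B$-finitely generated, choose a finite $B$-generating set $G\subset A$. Replace each $g\in G$ by its non-zero Peirce components $ygx$ relative to the decomposition $A=\bigoplus_{x,y\in Q_0}yAx$, obtaining a finite Peirce subset $G'$ that still $B$-generates $A$ (because each original $g$ is the sum of its Peirce components). For every $g'\in G'$ with $g'\in yAx$, introduce a formal new arrow $a_{g'}$ with $s(a_{g'})=x$ and $t(a_{g'})=y$; this yields a finite set $F$ of new arrows for $Q$ in the sense of Definition \ref{new arrows}. Set $T=kQ_F/\langle I\rangle_{kQ_F}$; as recalled before Definition \ref{by arrows and relations}, $T\cong T_B(N)$ with $N=\bigoplus_{a\in F}Bt(a)\otimes s(a)B$.

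Now construct $\varphi:T\to A$ via the universal property of $T_B(N)$ as a $B$-algebra: on $B$ take $\varphi_B$ to be the inclusion $\iota:B\hookrightarrow A$, and on $N$ take the $B$-bimodule map sending the generator $y\otimes x$ of the $a_{g'}$-summand to $g'\in yAx$, which is well defined since $g'=yg'x$. To verify surjectivity, invoke Proposition \ref{tensor epi}: the algebra morphism $T_B(BG'B)\to A$ determined by $G'$ is surjective because $G'$ is a $B$-generating set. On each free rank-one summand $B\otimes B$ of $BG'B$ corresponding to some $g'\in yAx$, the image of $b\otimes b'$ in $A$ equals $bg'b'=(by)g'(xb')$, so the map factors through the surjection $B\otimes B\twoheadrightarrow By\otimes xB$ given by $b\otimes b'\mapsto by\otimes xb'$. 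Hence $T_B(BG'B)\to A$ factors as a surjection $T_B(BG'B)\twoheadrightarrow T_B(N)=T\stackrel{\varphi}{\to}A$, which forces $\varphi$ itself to be surjective.

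Setting $J=\ker\varphi$ yields $A\cong T/J$. Since $\varphi|_B=\iota$ is injective, $J\cap B=0$, so $B\subset A$ is exhibited as an extension by arrows and relations in the sense of Definition \ref{by arrows and relations}. The only delicate step is the surjectivity verification above, where one must reconcile the free $B$-bimodule $BG'B=\bigoplus_{g'}B\otimes B$ appearing in Proposition \ref{tensor epi} with the smaller Peirce-refined bimodule $N$ defining $T$; refining $G$ to a Peirce subset $G'$ is precisely what makes the two fit together.
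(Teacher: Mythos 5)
Your proof is correct and follows essentially the same route as the paper's: refine the generating set into its Peirce components, form the tensor algebra $T_B(N)$ on the resulting projective bimodule of new arrows, and take $J$ to be the kernel of the induced surjection onto $A$, noting $J\cap B=0$ because the restriction to $B$ is the inclusion. The only cosmetic difference is that you establish surjectivity by factoring $T_B(BG'B)\to A$ through the quotient $T_B(BG'B)\twoheadrightarrow T_B(N)$, whereas the paper restricts to the direct summand $T_B(N)$ after observing that the complementary summands $Byg'xB$ with $ygx=0$ lie in the kernel; these two arguments are equivalent.
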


\begin{proof}
Let $G$ be a finite $B$-generating set of the extension $B\subset A$. We first observe that $G$ provides a Peirce set $F= \{ygx\neq 0\mid g\in G\mbox{ and } x,y\in Q_0\}$ which $B$-generates $A$.
Note that  $1\otimes 1=\sum_{x,y\in Q_0} y\otimes x,$ therefore $$B\otimes B=\bigoplus_{x,y\in Q_0} By\otimes xB.$$
Recall that $G'$ is a set in one-to-one correspondence with $G$. Hence for $g\in G$, we have
$$Bg'B= \bigoplus_{x,y\in Q_0} Byg'xB$$
where $Byg'xB$ is the $B$-direct summand of $Bg'B$ which corresponds to $By\otimes xB$ in $B\otimes B$.

  Let $F'$ be a copy of $F$. If $a\in F$ belongs to $yAx$, we set $t(a')=y$ and $s(a')=x$. This way $F'$ is a set of new arrows for $Q$. Let $$N= \bigoplus_{a'\in F'}Bt(a')a's(a')B=\bigoplus_{a'\in F'}Bt(a')\otimes s(a')B.$$
Clearly $N$  is a direct summand of the $B$-bimodule $BG'B$ with complement  $$\bigoplus_{\substack{g\in G\\ygx=0}} Byg'xB.$$
Let $\varphi:T_B(BG'B)\to A$ be the surjective algebra morphism given by Proposition \ref{tensor epi}. Note that if $ygx=0$ then by construction of $\varphi$ we have $$\varphi(yg'x)=y\varphi(g')x=ygx=0.$$
Hence $Byg'xB \subset \Ker\varphi$. Therefore $\psi=\varphi_{\mid_{T_B(N)}}$ has the same image than $\varphi$, then $\psi$ is surjective.

For $J=\Ker \psi$ we infer an isomorphism of algebras $T_B(N)/J \stackrel{\overline{\psi}}{\to} A$. The composition $B\to T_B(N) \stackrel{\psi}{\to} A$ is the inclusion $B\subset A$, hence $J\cap B=0$.

Recall that $B=kQ/I$, and that $Q_{F'}$ is the union of the quiver $Q$ with the new arrows $F'$. As observed before, there is a canonical isomorphism of algebras  $$T_B(N)=\frac{kQ_{F'}}{\langle I\rangle_{kQ_{F'}}}.$$
We still denote by $J$ the ideal in $\frac{kQ_{F'}}{\langle I\rangle_{kQ_{F'}}}$ corresponding to $\Ker \psi$. We have proven that the algebras $\frac{kQ_{F'}}{\langle I\rangle_{kQ_{F'}}}/J$ and $A$ are isomorphic. Moreover the isomorphism is the identity on $B$.\qed

\end{proof}

Let $B\subset A$ be an extension by arrows and relations.  In the sequel we focus on giving a sufficient condition for $A/B$ to be $B$-tensor nilpotent. First we introduce the following definitions.

\begin{defi}
Let $B\subset A$ be an extension by arrows and relations. A \emph{relative path of $F$-length $n\geq 0$}  is a concatenated sequence
$$\beta_na_n\dots\ \beta_2a_2\beta_1a_1\beta_0$$
where the $\beta_i$'s are paths of $Q$ not in $I$ (that is non-zero paths of $B$) and the $a_i$'s are in $F$.
\end{defi}

\begin{rema}\
\begin{itemize}
  \item[-] A relative path of $F$-length $0$ is a path $\beta$ of $Q$ which is non-zero in $B$.
  \item[-] The set of relative paths span $T$ over $k$.
  \item[-] The definition of a relative path only takes into account $kQ/I$ and $F$.
\end{itemize}

  \end{rema}

\begin{defi}
A \emph{relative cycle} of $F$-length $n\geq 1$ is a concatenated sequence
$$\beta_na_n\ \dots\ \beta_2a_2\beta_1a_1$$
where the $\beta_i$'s are paths of $Q$ not in $I$, the $a_i$'s are in $F$, and $t(\beta_n)=s(a_1).$
\end{defi}

\begin{rema}
By abuse of language, we also call a \emph{relative cycle} of $F$-length $n\geq 1$ a concatenated sequence
$$a_1\beta_na_n\ \dots\ \beta_2a_2\beta_1a_1$$
where the $\beta_i$'s are paths of $Q$ not in $I$ and the $a_i$'s are in $F$.
\end{rema}

In Example \ref{THEexample} (resp. \ref{rea}), the sequence $a(bcd)a$ (resp. $dcd$) is a relative cycle of length $1$.

In what follows, for $\gamma\in T$, we denote $\overline{\gamma}$ its class in $A=T/J.$

\begin{defi}\label{interrupter}
Let $a_1\beta_na_n\ \dots\ \beta_2a_2\beta_1a_1$ be a relative cycle. A \emph{$J$-interrupter} of the relative cycle is an arrow $a_i$ such that in case  $2\leq i\leq n$ we have
$$\overline{\beta_ia_i\beta_{i-1}}\in B,$$
while for $i=1$ we have
$$\overline{\beta_1a_1\beta_{n}}\in B.$$
\end{defi}

In Example \ref{THEexample}  the arrow $a$  is a $J$-interrupter of the relative cycle $a(bcd)a$. Indeed
$$\overline{(bcd)a(bcd)}=\overline{ bcd\alpha\beta}\in B.$$

On the other hand, in Example \ref{rea} the arrow $d$ is not a $J$-interrupter of the relative cycle $dcd$ since
$\overline{cdc}\notin B.$ Moreover $A/B$ is not tensor nilpotent over $B$. Indeed $cd\otimes cd = cdc\otimes d $ represents a non zero element of $(A/B)^{\otimes_B2}$, hence $cd\otimes cd \otimes \cdots \otimes cd$ represents a non zero element of  $(A/B)^{\otimes_Bn}$  for all $n$.

\begin{lemm}\label{no relative cycles}
Let $B\subset A$ be an extension by arrows and relations.
If there is no relative cycles then $A/B$ is $B$-tensor nilpotent.
\end{lemm}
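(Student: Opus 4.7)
The plan is to use the tensor algebra presentation $T = T_B(N)$ with $N = \bigoplus_{a\in F} Bt(a)\otimes s(a)B$, where $A = T/J$, and exploit the grading on $T$ by $F$-length. The key observation will be a pigeonhole argument: absence of relative cycles forces every relative path to use each new arrow at most once, hence caps the $F$-length, which kills $N^{\otimes_B n}$ for large $n$; this in turn kills $(A/B)^{\otimes_B n}$.

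First I would argue the combinatorial heart of the proof. Suppose $\beta_n a_n \cdots \beta_1 a_1 \beta_0$ is a (non-zero, abstract) relative path and some new arrow is repeated, say $a_i = a_j =: a$ with $i<j$. Then I would consider the sub-expression $\beta_{j-1} a_{j-1} \cdots \beta_{i+1} a_{i+1} \beta_i a_i$; its $\beta$-pieces are inherited non-zero paths of $B$, and the composability of the original path gives $t(\beta_{j-1}) = s(a_j) = s(a_i)$, so this is a relative cycle of $F$-length $j-i\geq 1$. Contrapositively, if no relative cycles exist, every relative path uses each element of $F$ at most once, and in particular has $F$-length at most $|F|$.

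Next I would translate this into bimodule vanishing. Since $N^{\otimes_B m}$ has a $k$-basis indexed by the abstract relative paths of $F$-length exactly $m$ (the $\beta_i$'s ranging over bases of the appropriate Peirce components $e_{s(a_{i+1})}Be_{t(a_i)}$ of $B$), the previous step gives $N^{\otimes_B m} = 0$ for $m > |F|$. Then the positive part $T_+ = \bigoplus_{m\geq 1} N^{\otimes_B m}$ is a finite direct sum, and for any $n$,
\begin{equation*}
T_+^{\otimes_B n} \;=\; \bigoplus_{m_1,\dots,m_n\geq 1} N^{\otimes_B m_1}\otimes_B\cdots\otimes_B N^{\otimes_B m_n} \;=\; \bigoplus_{m_1,\dots,m_n\geq 1} N^{\otimes_B(m_1+\cdots+m_n)}.
\end{equation*}
For $n>|F|$ every index $m_1+\cdots+m_n$ exceeds $|F|$, so $T_+^{\otimes_B n}=0$.

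Finally I would note that $T_+\to A/B$ is a surjection of $B$-bimodules: any $t\in T$ decomposes uniquely as $t=b+t_+$ with $b\in B$ and $t_+\in T_+$, and since $B\cap J=0$ the subalgebra $B$ maps to zero in $A/B$, so the class of $t$ equals the class of $t_+$. Right exactness of tensor product then yields a surjection $T_+^{\otimes_B n}\twoheadrightarrow (A/B)^{\otimes_B n}$, forcing $(A/B)^{\otimes_B n}=0$ for $n>|F|$. The only non-routine step is the cycle-extraction in the first paragraph; everything else is a bookkeeping consequence of the tensor-algebra grading and of $J\cap B=0$.
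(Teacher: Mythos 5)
Your proof is correct, and it takes a genuinely different route from the paper's, although the two share the same combinatorial kernel (extracting a relative cycle from a repeated letter). You pigeonhole on the new arrows themselves, concluding that without relative cycles every relative path has $F$-length at most $|F|$; the paper instead pigeonholes on the finitely many relative paths of the form $\beta a$, and it is there that admissibility of $I$ is invoked to bound the number of non-zero $\beta$'s. Your bound is sharper and does not lean on admissibility for this step. The bigger difference is in the bookkeeping: the paper works directly with spanning tensors $\overline{\overline{\gamma_m}}\otimes_B\cdots\otimes_B\overline{\overline{\gamma_1}}$ of $(A/B)^{\otimes_B m}$ and runs a reduction at each tensor boundary, splitting into the case where a product $\beta''\beta'$ falls into $I$ (killing the tensor) and the case where the full concatenation is a genuine relative path (forcing a cycle). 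You work upstairs in $T=T_B(N)$, where $N^{\otimes_B m}=0$ for $m>|F|$ follows at once from the absence of long relative paths, the grading gives the vanishing of the $n$-th tensor power of the positive part for $n>|F|$, and right exactness applied to the surjection of $B$-bimodules onto $A/B$ pushes the vanishing down to $(A/B)^{\otimes_B n}$; this absorbs the paper's case analysis into the tensor-algebra structure. Two minor points of precision: the relative paths of $F$-length $m$ give a spanning set of $N^{\otimes_B m}$ rather than canonically a basis, since distinct paths of $Q$ not in $I$ may be linearly dependent in $B$ --- but spanning is all you use once path-class bases of the Peirce components are fixed; and $J\cap B=0$ is not needed for the surjectivity of your map onto $A/B$, only for the extension itself to be well posed.
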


Before providing the proof, we underline that an oriented cycle in $Q_F$ is not necessarily a relative cycle, as in the following example.
\begin{exam}
Let $B=kQ/I$ where $Q$ is the full arrows quiver
$$
\begin{tikzcd}
& 2 \arrow[r,"b"] &3\arrow[d,"c"]\\
  &1\arrow[u, "a", dashed, swap]  & 4 \arrow[l,"d"] \\
\end{tikzcd}
$$
and $I=\langle dcb\rangle$. Let $F=\{a\}$ be the dashed new arrow from $1$ to $2$. The quiver $Q_F$ has oriented cycles but there are no relative cycles.
\end{exam}
\textbf{Proof of Lemma \ref{no relative cycles}.}
Let $B=kQ/I$, where $Q$ is a quiver and $I$ is an admissible ideal.

We observe first that $A/B$ is spanned over $k$ by $\{\overline{\overline{\gamma}}\}$, that is by the classes modulo $J$ and $B$ of relative paths $\gamma$ of positive $F$-length.

Hence the set of $m$-tensors over $B$  $$\Gamma= \overline{\overline{\gamma_m}}\otimes_B\dots\otimes_B\overline{\overline{\gamma_1}}$$ spans $(A/B)^{\otimes_B m}$ over $k$, where $\gamma_m,\dots,\gamma_1$ are relative paths of positive $F$-length.  Note that since $kQ_0 \subset B$, if the relative paths are not concatenated then $\Gamma=0$.

Recall that $I$ is admissible, this implies that the paths $\beta$ not in $I$ are of bounded length. Hence there is a finite number $n$ of relative paths of the form $\beta a$.

Let $\Gamma= \overline{\overline{\gamma_{n+1}}}\otimes_B\dots\otimes_B\overline{\overline{\gamma_1}}$ be a $k$-generator of $(A/B)^{\otimes_B n+1}$, where $\gamma_{n+1},\dots,\gamma_1$ are concatenated relative paths of positive $F$-length.

We consider the concatenated sequence $\gamma_{n+1}\dots \gamma_1$ and we reduce it as follows. For each occurrence
$$\cdots a''\beta''\otimes_B\beta'a'\cdots $$
we make the product $\beta''\beta'$ in the sequence.

Suppose that some path $\beta''\beta'$ belongs to $I$. Since
\begin{equation}\label{balancea}
\cdots a''\beta''\otimes_B\beta'a'\cdots = \cdots a''\beta''\beta'\otimes_Ba'\cdots
\end{equation}
we infer that the sequence containing $\beta''\beta'$ is in $\langle I\rangle_{kQ_F}$. Then the corresponding tensorand is zero, and $\Gamma=0$.

On the contrary, in case none of the $\beta''\beta'$ is zero in $B$, we infer a relative path of $F$-length at least $n+1$.  Then there is a repetition of some relative path of the form $\beta a$ and we obtain a relative cycle
$$a \beta_m a_m \dots \beta_2a_2 \beta a$$
for some $m\leq n$.
Therefore, this case does not occur.\qed

\begin{theo}\label{interrupter tensor nilpotent}
Let $B\subset A$ be an extension by arrows and relations, where $B=kQ/I$ and $A=\frac{kQ_F}{\langle I\rangle_{kQ_F}}/{J}$.
If each relative cycle has a $J$-interrupter then $A/B$ is $B$-tensor nilpotent.
\end{theo}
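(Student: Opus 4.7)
Proof plan:

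I will adapt the argument of Lemma \ref{no relative cycles}, upgrading the pigeonhole step so that whenever the forced relative cycle appears, its $J$-interrupter is used to shorten the tensor one arrow at a time. Since $I$ is admissible and $F$ is finite, the set of pairs $(\beta,a)$ with $\beta$ a non-zero path of $B$ and $a\in F$ concatenable with $\beta$ is finite; let $N$ be its cardinality. The goal is to show that $(A/B)^{\otimes_B M}=0$ for every $M>N$.

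Following the setup of Lemma \ref{no relative cycles}, any $k$-generator of $(A/B)^{\otimes_B M}$ has the form $\Gamma=\overline{\overline{\gamma_M}}\otimes_B\cdots\otimes_B\overline{\overline{\gamma_1}}$ with each $\gamma_j$ a relative path of positive $F$-length $n_j$. Using $B$-balance I normalise so that each $\gamma_j=\beta^j_{n_j}a^j_{n_j}\cdots\beta^j_1 a^j_1$ ends with an arrow. The conceptual concatenation $\gamma_M\cdots\gamma_1$ then carries $L:=\sum n_j\geq M>N$ blocks $\tilde\beta_p\tilde a_p$, so pigeonhole produces a relative cycle, and by hypothesis this cycle provides a substring $\tilde\beta_p\tilde a_p\tilde\beta_{p-1}$ whose image in $A$ equals some $b\in B$. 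Write $b=\sum_i\lambda_i\beta'_i$ as a sum of non-zero paths of $B$ with the appropriate source and target.

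I then use this interrupter to rewrite $\Gamma$. If the substring lies inside a single $\gamma_j$, which forces $n_j\geq 2$, I substitute $b$ for $\beta^j_k a^j_k\beta^j_{k-1}$ inside $\gamma_j$, which is legitimate because $\beta^j_k a^j_k\beta^j_{k-1}-b\in J$; expanding $b$ then expresses $\overline{\overline{\gamma_j}}$ as a sum of classes of relative paths of $F$-length $n_j-1$. If instead the substring straddles the boundary between $\gamma_j$ on the left and $\gamma_{j-1}$ on the right, so that it takes the form $\beta^j_1 a^j_1\beta^{j-1}_{n_{j-1}}$, I first move $\beta^{j-1}_{n_{j-1}}$ across the tensor via $B$-balance, apply the interrupter identity inside the left factor, expand $b$, and return each $\beta'_i$ to the right factor by $B$-balance. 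If $n_j=1$ the left factor collapses to $\sum_i\lambda_i\overline{\overline{\beta'_i}}=0$ in $A/B$, so $\Gamma=0$; otherwise $L$ drops by one while the number of tensor factors stays $M$.

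I conclude by strong induction on $L$. The base case $L=M$ forces every $n_j=1$, so only the boundary reduction is available and it annihilates $\Gamma$ directly. For $L>M$ some $n_j\geq 2$, and the reduction above writes $\Gamma$ as a sum of generators of the same shape with $\beta a$-length $L-1$, to which the induction hypothesis applies. The technically delicate point is the boundary reduction: one must verify that the $B$-balance manoeuvres legitimately import the interrupter identity into the tensor via the $A/B$ bimodule structure, that the paths $\beta'_i$ produced at each stage remain non-zero paths of $B$ so that the constant $N$ continues to control the pigeonhole after each reduction, and that the $n_j=1$ subcase genuinely collapses in $A/B$. Once these are checked, the induction terminates and $(A/B)^{\otimes_B M}=0$ for $M>N$, so $A/B$ is $B$-tensor nilpotent.
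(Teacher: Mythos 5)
Your proposal is correct and follows essentially the same route as the paper: pigeonhole on the finitely many blocks $\beta a$ (finite because $I$ is admissible) to force a relative cycle inside a long tensor, then use its $J$-interrupter to replace $\beta' c\beta$ by an element of $B$ --- merging across a tensor symbol by $B$-balance when the two paths lie in different tensorands --- so that the number of new arrows drops by one at each step until some tensorand lands in $B$ and hence vanishes in $A/B$. Your explicit strong induction on the total number $L$ of new arrows, with the base case $n_j=1$ collapsing a tensorand into $B$, is simply a more carefully bookkept version of the paper's ``we restart the process'' termination argument.
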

\begin{proof}
If there are no relative cycles then Lemma \ref{no relative cycles} shows that $A/B$ is $B$-tensor nilpotent.

Next we assume that each relative cycle contains a $J$-interrupter. As before, let $\Gamma= \overline{\overline{\gamma_{n+1}}}\otimes_B\dots\otimes_B\overline{\overline{\gamma_1}}$ be a $k$-generator of $(A/B)^{\otimes_B n+1}$, where $\gamma_{n+1},\dots,\gamma_1$ are concatenated relative paths of positive $F$-length. By reducing the concatenated sequence $\gamma_{n+1}\dots \gamma_1$ as before we can assume that it is a relative path. Indeed, otherwise there is a product $\beta''\beta'$ arising from some $\cdots a''\beta''\otimes_B\beta'a'\cdots $  which is in $I$, and by (\ref{balancea}) we have $\Gamma=0$.

Since $n$ is the number of relative paths of the form $\beta a$,  there is a repetition of some relative path $\beta a$, that is we have a relative cycle
\begin{equation}\label{star}
 a \beta_m a_m \dots \beta_2a_2 \beta a.
\end{equation}
 Let $c$ be a $J$-interrupter of it, with $\overline{\beta' c\beta}\in B$.
If in the expression of $\Gamma$ we have that $\beta'$ and $\beta$ belong to different tensorands, for instance $\cdots \beta'c\otimes_B \beta\cdots $, using the equality  $$\cdots \beta'c\otimes_B \beta\cdots = \cdots \beta'c\beta\otimes_B \cdots $$ we obtain an expression of $\Gamma$ which contains, in one of its tensorands, the relative path $\beta'c\beta$. Modulo $J$, we replace $\beta'c\beta$ by a linear combination of paths of $Q$ which are non-zero in $B$. Hence $\Gamma$ is a linear combination of $n$-tensors, each of the summands having one less new arrow. If a summand has a tensorand which is in $B$ (\emph{i.e.} it has no more new arrows), then this summand is zero. For the others, we restart the process. This way we end with $\Gamma=0$.
\qed
\end{proof}

Next we will give conditions ensuring that $A/B$ is projective as a left $B$-module. Of course reversing the arguments in the following proof gives conditions ensuring that  $A/B$ is right $B$-projective.

\begin{lemm}\label{trivial}
Let $C$ be a $k$-algebra and let $X$ and $Y$ be respectively a left and a right $C$-module. Let $Y'$ be a right $C$-subbimodule of $Y$.
The $C$-bimodules $X\otimes \frac{Y}{Y'}$ and $\frac{X\otimes Y}{X\otimes Y'}$ are isomorphic.
\end{lemm}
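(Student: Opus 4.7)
The plan is to recognize this statement as a routine consequence of the exactness of the tensor product over a field. The lemma's label ``\ref{trivial}'' is a good hint that no serious work is required; the only thing one must be careful about is tracking the bimodule structure on each object. Note that $X \otimes Y$ carries a natural $C$-bimodule structure (left action from $X$, right action from $Y$, tensor product over $k$), and since $Y'$ is assumed to be a right $C$-submodule of $Y$, the quotient $Y/Y'$ is again a right $C$-module, so $X \otimes (Y/Y')$ is likewise a $C$-bimodule.

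First I would write down the short exact sequence of right $C$-modules
\begin{equation*}
0 \to Y' \to Y \to Y/Y' \to 0
\end{equation*}
and apply the functor $X \otimes_k -$. Because $k$ is a field, $X$ is flat over $k$, so this functor is exact and yields a short exact sequence
\begin{equation*}
0 \to X \otimes Y' \to X \otimes Y \to X \otimes (Y/Y') \to 0.
\end{equation*}
In particular, $X \otimes Y'$ injects into $X \otimes Y$, and the quotient is canonically identified with $X \otimes (Y/Y')$.

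Finally, I would verify that this identification respects the $C$-bimodule structures. Each of the three maps in the sequence above is $k$-linear and $C$-bilinear for the natural left-$C$ action on $X$ and the natural right-$C$ actions on $Y$, $Y'$, and $Y/Y'$; consequently the induced isomorphism
\begin{equation*}
\frac{X \otimes Y}{X \otimes Y'} \stackrel{\sim}{\longrightarrow} X \otimes (Y/Y')
\end{equation*}
is an isomorphism of $C$-bimodules, which is the desired statement. There is no genuine obstacle: the single point to watch is the compatibility of quotient with tensor, which is just right exactness of $X \otimes_k -$, and the compatibility of the induced map with the bimodule actions, which is immediate from the definitions.
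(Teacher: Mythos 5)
Your proof is correct and follows essentially the same route as the paper: the paper's own (one-line) proof also considers the short exact sequence $0\to Y'\to Y\to Y/Y'\to 0$ and applies the exact functor $X\otimes -$. Your additional remarks on flatness over the field $k$ and on tracking the $C$-bimodule structures only make explicit what the paper leaves implicit.
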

\begin{proof} Consider the exact sequence $0\to Y'\to Y\to \frac{Y}{Y'}\to 0$ and apply the exact functor $X\otimes -$\qed
\end{proof}

\begin{lemm}
Let $B\subset A$ be an extension by arrows and relations, where $A$ is finite dimensional over $k$.
There exists $n>0$ such that any relative path of $F$-length $m\geq n$ is $J$-equivalent to a linear combination of paths of $F$-length smaller than $n$.
\end{lemm}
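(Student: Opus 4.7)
The plan is to prove the lemma by a straightforward dimension-counting argument, exploiting the fact that $\dim_k A$ is finite.

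For each $m\geq 0$, let $T_{\leq m}\subset T$ denote the $k$-linear subspace spanned by relative paths of $F$-length at most $m$. By the remark following the definition of relative path, the set of all relative paths spans $T$ over $k$, and each relative path has a well-defined finite $F$-length, so
\[
T_{\leq 0}\subseteq T_{\leq 1}\subseteq T_{\leq 2}\subseteq\cdots\qquad \text{and}\qquad \bigcup_{m\geq 0} T_{\leq m}=T.
\]
Writing $\pi\colon T\to A=T/J$ for the canonical projection, I set $A_{\leq m}=\pi(T_{\leq m})$. The $A_{\leq m}$ form an ascending chain of $k$-subspaces of $A$ with union equal to $A$.

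Since $\dim_k A<\infty$, this chain must stabilize: there exists $N\geq 0$ with $A_{\leq N}=A_{\leq N+1}=\cdots=A$. I take $n=N+1$. Given any relative path $\gamma$ of $F$-length $m\geq n$, we have $\gamma\in T_{\leq m}$, hence $\pi(\gamma)\in A_{\leq m}=A_{\leq N}$. Therefore there exists $\gamma'\in T_{\leq N}$ with $\pi(\gamma)=\pi(\gamma')$, that is $\gamma-\gamma'\in J$. By construction $\gamma'$ is a $k$-linear combination of relative paths of $F$-length at most $N<n$, which is exactly the conclusion sought.

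I do not expect any real obstacle here; the whole argument is a filtration-and-stabilization exercise. The only point worth checking is that the filtration $\{T_{\leq m}\}$ exhausts $T$, which is immediate from the remark that relative paths span $T$ over $k$ together with the fact that each such path belongs to some $T_{\leq m}$.
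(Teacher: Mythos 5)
Your proof is correct and rests on the same two facts as the paper's: the classes of relative paths span $A$ over $k$, and $\dim_k A<\infty$. The paper simply extracts a finite basis of $A$ from this spanning set and takes $n-1$ to be the maximal $F$-length occurring in it, whereas you phrase the same dimension count as stabilization of the filtration $A_{\leq m}$; the two arguments are essentially identical.
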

\begin{proof}
The classes of relative paths $k$-span $A$, and we extract a finite basis from it. Then $n-1$ is the maximal $F$-length of the relative paths in the basis.\qed
\end{proof}

\begin{defi}For a finite dimensional extension by arrows and relations $B\subset A$, the \emph{length index} of the extension is the smallest $n$ as above.
\end{defi}

Recall that for an extension by arrows and relations $B\subset A$, the algebra $T$ is the tensor algebra
$$T_B(N) = B \oplus \bigoplus_{i>0}N^{\otimes_B i} $$
where $N=\oplus_{a\in F} Bt(a)\otimes s(a)B$. Let $T^{>0}=  \oplus_{i>0}N^{\otimes_B i}$ be its positive ideal, and let $T^{]0,n[}$ be the $B$-bimodule $\oplus_{0<i<n}N^{\otimes_B i}$.

\begin{theo}\label{proj}
Let $B\subset A$ be an extension by arrows and relations of length index $n$. Suppose that the projection $J_{]0,n[}$ of $J$ on $T^{]0,n[}$ is generated as a $B$-bimodule by relations of the form $r= a\sum \lambda_\gamma \gamma$ where $a\in F$, the $\gamma$'s are relative paths concatenated with $a$ and $\lambda_\gamma\in k$.

Then $A/B$ is left $B$-projective.
\end{theo}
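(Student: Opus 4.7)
The plan is to identify $A/B$ as a left $B$-module with $T^{]0,n[}/J_{]0,n[}$ and then to exhibit the latter as a direct sum of free left $B$-modules by exploiting the tensor algebra structure of $T^{]0,n[}$.

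First I would set up the graded $B$-bimodule decomposition $T=B\oplus T^{]0,n[}\oplus T^{\geq n}$ coming from $T=T_B(N)=\bigoplus_{i\geq 0}N^{\otimes_B i}$, and show that the composition $\phi\colon T^{]0,n[}\hookrightarrow T\twoheadrightarrow A=T/J\twoheadrightarrow A/B$ is a surjection of left $B$-modules. Surjectivity is immediate from the length index: the inclusion $T=T^{[0,n[}+J$ implies that, modulo $J$ and $B$, only the image of $T^{]0,n[}$ remains. A direct computation using the decomposition, together with $J\cap B=0$, shows that $\ker\phi=\{j^{]0,n[}:j\in J,\ j^{\geq n}=0\}$, where the superscripts denote the components in the given grading.

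Next I would invoke the hypothesis. Reading the stated generators $r=a\sum\lambda_\gamma\gamma$ as actual relations (i.e.\ elements of $J$) that happen to lie in $T^{]0,n[}$, the $B$-sub-bimodule they generate is contained in $J\cap T^{]0,n[}$. Since any element of $J\cap T^{]0,n[}$ equals its own projection onto $T^{]0,n[}$, the reverse inclusion holds, so $J_{]0,n[}=J\cap T^{]0,n[}$, and this set coincides with $\ker\phi$. Therefore $A/B\cong T^{]0,n[}/J_{]0,n[}$ as left $B$-modules. I would then exploit the tensor algebra identification $T^{>0}\cong N\otimes_B T$ of $B$-bimodules, which restricts to $T^{]0,n[}\cong N\otimes_B T^{[0,n-1[}$. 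Combined with $N=\bigoplus_{a\in F}Bt(a)\otimes_k s(a)B$ and the collapse $s(a)B\otimes_B T^{[0,n-1[}=s(a)T^{[0,n-1[}$, this yields
\[
T^{]0,n[}\;\cong\;\bigoplus_{a\in F}Bt(a)\otimes_k s(a)T^{[0,n-1[}
\]
as left $B$-modules, displaying $T^{]0,n[}$ as a direct sum of free left $B$-modules (each summand is a direct sum of copies of the projective left ideal $Bt(a)$).

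Finally I would check that $J_{]0,n[}$ respects this splitting. A generator $r=aw$ corresponds under the identification to $t(a)\otimes s(a)w$ in the $a$-summand, so the $B$-bimodule $BrB$ it generates becomes $Bt(a)\otimes_k s(a)wB$. Summing over all generators,
\[
J_{]0,n[}\;=\;\bigoplus_{a\in F}Bt(a)\otimes_k W_a
\]
for suitable right $B$-submodules $W_a\subseteq s(a)T^{[0,n-1[}$, and hence
\[
A/B\;\cong\;\bigoplus_{a\in F}Bt(a)\otimes_k\bigl(s(a)T^{[0,n-1[}/W_a\bigr),
\]
a direct sum of free left $B$-modules, and in particular projective. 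The step I expect to require the most care is the identification $\ker\phi=J_{]0,n[}$, which depends crucially on interpreting the stated generators as actual elements of $J$ (compatibly with $J\cap B=0$) and on invoking the length index to eliminate the $T^{\geq n}$ contribution.
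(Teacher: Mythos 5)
Your overall strategy coincides with the paper's: identify $A/B$ with $T^{]0,n[}/J_{]0,n[}$ as left $B$-modules, use the tensor algebra structure to write $T^{]0,n[}=\bigoplus_{a\in F}Bt(a)\otimes s(a)T^{<n-1}$, observe that the form of the generators forces $J_{]0,n[}=\bigoplus_{a\in F}Bt(a)\otimes Y_a$ for suitable right $B$-submodules $Y_a$, and conclude that the quotient is a direct sum of copies of the projective modules $Bt(a)$ (the paper packages this last step as Lemma \ref{trivial}). The decomposition and the projectivity conclusion are exactly the paper's.

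There is, however, one genuinely wrong step: your reading of the hypothesis. You interpret the generators $r=a\sum\lambda_\gamma\gamma$ as actual elements of $J$ and deduce $J_{]0,n[}=J\cap T^{]0,n[}$. This is false in general, and in particular in the paper's running Example \ref{THEexample}: there $J=\langle abcd-\alpha\beta\rangle$, the projection $J_{]0,2[}$ is generated as a $B$-bimodule by $abcd$, and $abcd\notin J$ (its class in $A$ equals $\alpha\beta\neq 0$), so $J\cap T^{]0,2[}$ is strictly smaller than $J_{]0,2[}$. The hypothesis concerns generators of the \emph{projection} of $J$ onto $T^{]0,n[}$; these differ from elements of $J$ by components lying in $B$ (and possibly in $T^{\geq n}$), which is precisely why the theorem covers non-split situations. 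Your computation of $\ker\phi$ as $\{j^{]0,n[}\colon j\in J,\ j^{\geq n}=0\}$ is correct, and what you actually need --- and what the paper asserts via $A/B=T^{>0}/J_{>0}=T^{]0,n[}/J_{]0,n[}$, using $J\cap B=0$ and the length index --- is that this kernel is the projection $J_{]0,n[}$, not $J\cap T^{]0,n[}$. Once the hypothesis is read this way, the rest of your argument (that a $B$-sub-bimodule generated by elements of the form $aw$ splits as $\bigoplus_{a\in F}Bt(a)\otimes Y_a$ compatibly with the decomposition of $T^{]0,n[}$, so that the quotient is again a direct sum of copies of the $Bt(a)$) is exactly the paper's and goes through unchanged.
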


Before proving the theorem, we observe that the hypothesis is verified in  Example \ref{THEexample}. Indeed, the ideal $J$ is generated by $abcd-\alpha\beta$. Note that $abcda$ is a relative path of $F$-length two, and $\alpha\beta a$ is a relative path of $F$-length one. In $A$,  we have that $abcda = \alpha\beta a$.  The relative paths of $F$-length $\geq 3$ are zero in $A$. Therefore the length index of the extension is $2$. Moreover $J_{]0,2[}$ is generated by $abcd$ as a $B$-bimodule.
\vskip3mm

\begin{proof}
Let $J_{>0}$ be the projection of $J$ on $T^{>0}$. Since $B\cap J=0$, as $B$-bimodules we have that $$\frac{A}{B}=
\frac{T^{>0}}{J_{>0}}.
$$
Moreover, since the length index is $n$
 $$\frac{T^{>0}}{J_{>0}} = \frac{T^{]0,n[}}{J_{]0,n[}}.$$
The  relative path $a\beta$ corresponds to $t(a)\otimes s(a)\beta=t(a)\otimes \beta\in T$. The $B$-bimodule in $T$ generated by $a\beta$ is $k$-spanned by $\beta''t(a)\otimes \beta\beta'$ where $\beta''$ and $\beta'$ are  paths of $Q$ which are non-zero in $B$.

 We observe that for $i\geq 1$ we have
$$N^{\otimes_Bi}= \bigoplus_{a\in F} Bt(a)\otimes s(a)B\otimes_BN^{\otimes_Bi-1}= \bigoplus_{a\in F} Bt(a)\otimes s(a)N^{\otimes_Bi-1}.$$
Therefore
$$T^{]0,n[}=\bigoplus_{a\in F} Bt(a)\otimes s(a)T^{<n-1}.$$

For an arrow $a$, we consider the $r$'s of type $a$, that is of the form  $r= a\sum \lambda_\gamma \gamma$. Let $r'=\sum \lambda_\gamma \gamma \in T$ and let $Y_a$ be the right submodule of $s(a)T^{< n-1}$ generated by them. Clearly
$$J_{>0}= \bigoplus_{a\in F} Bt(a)\otimes  Y_a.$$
This way
$$\frac{A}{B}
=\bigoplus_{a\in F}\frac{ Bt(a)\otimes s(a)T^{< n-1}}{Bt(a)\otimes  Y_a}.
$$

By  Lemma \ref{trivial}, we obtain finally
$$\frac{A}{B}=\bigoplus_{a\in F}Bt(a)\otimes \frac{ s(a)T^{<n-1}}{Y_a}.$$
Note that $Bt(a)\otimes \frac{ s(a)T^{<n-1}}{Y_a}$ is isomorphic as left $B$-module to the direct sum of $\dim_k \left(\frac{ s(a)T^{< n-1}}{Y_a}\right)$ copies of $Bt(a)$. Moreover $Bt(a)$ is the left projective $B$-module corresponding to the idempotent $t(a)$. Hence $A/B$ is a left projective $B$-module.
\qed
\end{proof}

\section{\sf Examples}\label{examples}

We will use the tools that we have developed in order to consider examples of extensions by arrows and relations. We will study if the extension is left or right bounded and if it is split or not.

We recall that an extension $B\subset A$ is split if there is an ideal $M$ of $A$ such that $A=B\oplus M.$ This is of course equivalent to the fact that the algebra inclusion $B\hookrightarrow A$ splits.

\begin{exam}\sf
We recall the algebra extension provided in Example \ref{THEexample}, where $B=kQ/I$ with $Q$  the full arrows quiver

\[
\begin{tikzcd}
& 2 \arrow[d, "a", dashed, swap] & 3 \arrow[l,"b"]\\
5 \arrow[r,bend left,"\alpha"]  & 1 \arrow[r,"d"] \arrow[l,bend left,swap,"\beta"]  & 4 \arrow[u,"c"]  \\
\end{tikzcd}
\]
and $I=\langle\beta \alpha \rangle$. Let $F=\{a\}$ be the dashed new arrow from $2$ to $1$, and let $J=\langle abcd-\alpha\beta\rangle$. Then $A=kQ_F/\langle \beta\alpha, abcd -\alpha\beta\rangle.$

As mentioned, $a$ is   a  $J$-interrupter of the relative cycle $abcda$, and the same holds for longer relative cycles such as $abcdabcda$. So $A/B$ is tensor nilpotent by Theorem \ref{interrupter tensor nilpotent}.
 Let $e_1$ be the idempotent corresponding to the vertex $1=t(a).$  The left $B$-module $A/B$ is projective by Theorem \ref{proj}. More precisely, as noticed before, the length index of the extension is $2$. A basis of $$\frac{ s(a)T^{<1}}{Y_a}=\frac{e_2B}{bcdB}$$ is $\{e_2, b, bc\}$. Hence by  Theorem \ref{proj} we have that $A/B\simeq Be_1\oplus Be_1 \oplus Be_1$ as left $B$-modules.

 The algebra $B$ has finite global dimension, then $B^e$  also have it and the $B$-bimodules are of finite projective dimension, in particular $A/B$ is of finite projective dimension as a $B$-bimodule. Hence this extension by arrows and relations is left bounded.

We claim it is not split. In fact we will prove that there is not even a right $B$-submodule of $A$ such that $A=B\oplus X$. Let $X$ be a right $B$-submodule of $A$. Next we show that $B\cap X\neq 0$.

 Let $\B$ be a basis of paths for $B$ with $Q_0\subset \B$. The vector space $A/B$ is $k$-spanned by relative paths of $F$-positive length, including $a$. Hence we can complete $\B$ to a basis $\A$ of $A$, with relative paths of positive $F$-length, including $a$. Let $D$ be the $k$-vector space with basis $\A\setminus \{a\}$. We claim that there exists $x\in X$ such that in the basis $\A$, the coefficient $x_a$ of $a$ is not $0$. Indeed, otherwise $X\subset D$ and since $B\subset D$ we would have $B\oplus X \subset D$. This cannot hold since $D\subsetneq A$. Then $x=x_aa+y$ with $y\in D$ and $x_a\neq 0$.

Since $X$ is a right $B$-module, we have that $xbcd=x_aabcd+ybcd\in X$. We assert that $ybcd=\lambda bcd$ for $\lambda\in k$. Indeed, $y$ is a $k$-linear combination of the basis $\A\setminus \{a\}$ of $D$. The only possible path of $\A\setminus \{a\}$ which concatenates with $bcd$ on its left is $e_2$, and this proves the assertion.

Then $xbcd = x_a abcd + \lambda bcd = x_a \beta\alpha + \lambda bcd$. Note that $\beta\alpha$ and $bcd$ are linearly independent in $B$. Hence we obtain $0\neq xbcd\in X\cap B$.
\end{exam}

\begin{exam}
\sf
Let $Q$ be the full arrows quiver
$$
\begin{tikzcd}
& 2 \arrow[r,"d"] &3\\
5 \arrow[r,"\mu"]  & 1 \arrow[u, "a", dashed, swap] \arrow[r,"b"] &  \arrow[u,"c"] 4 \\
\end{tikzcd}
$$
and let $B=kQ$. Let $a$ be the dashed new arrow from $1$ to $2$, let $J=\langle da-cb\rangle$ and let $A$ be the corresponding extension by arrows and relations.

We assert that the extension is right bounded and not split. There are no relative cycles, so $A/B$ is tensor nilpotent. Moreover the conditions are satisfied to ensure that $A/B$ is a right projective $B$-module by the dual version of Theorem \ref{proj}. The algebra $B$ is hereditary, hence $B^e$ has finite global dimension and $A/B$ is of finite projective dimension as a $B$-bimodule.
As in the previous example the extension $B\subset A$ is not split.\end{exam}

\footnotesize
\noindent C.C.:\\
Institut Montpelli\'{e}rain Alexander Grothendieck, CNRS, Univ. Montpellier, France.\\
{\tt Claude.Cibils@umontpellier.fr}

\medskip
\noindent M.L.:\\
Instituto de Matem\'atica y Estad\'\i stica  ``Rafael Laguardia'', Facultad de Ingenier\'\i a, Universidad de la Rep\'ublica, Uruguay.\\
{\tt marclan@fing.edu.uy}

\medskip
\noindent E.N.M.:\\
Departamento de Matem\'atica, IME-USP, Universidade de S\~ao Paulo, Brazil.\\
{\tt enmarcos@ime.usp.br}

\medskip
\noindent A.S.:
\\IMAS-CONICET y Departamento de Matem\'atica,
 Facultad de Ciencias Exactas y Naturales,\\
 Universidad de Buenos Aires, Argentina. \\{\tt asolotar@dm.uba.ar}

\end{document}